\theoremstyle{plain}
\newtheorem{theorem}{Theorem}[section]
\newtheorem{corollary}{Corollary}[section]
\newtheorem{lemma}{Lemma}[section]
\newtheorem{proposition}{Proposition}[section]
\theoremstyle{remark}
\newtheorem{remark}{Remark}[section]
\theoremstyle{definition}
\renewcommand{\vec}[1]{\boldsymbol{#1}}
\newcommand{\paren}[1]{\left(#1\right)}
\newcommand{\brac}[1]{\left[#1\right]}
\newcommand{\E}{\mathbb{E}}
\newcommand{\R}{\mathbb{R}}
\newcommand{\avg}[1]{\E\brac{#1}}
\newcommand{\D}[2]{\frac{d#1}{d#2}}
\newcommand{\DD}[2]{\frac{d^{2}{#1}}{d{#2}^{2}}}
\newcommand{\DO}[1]{\frac{d}{d#1}} 
\newcommand{\PD}[2]{\frac{\partial#1}{\partial#2}}
\newcommand{\PDO}[1]{\frac{\partial}{\partial#1}} 
\newcommand{\lap}[1]{\Delta#1}
\newcommand{\abs}[1]{\left|#1\right|}
\newcommand{\norm}[1]{\Vert#1\Vert}
\DeclareMathOperator{\erfc}{erfc}
\newcommand{\ind}{\mathbf{1}}
\newcommand{\rb}{r_{\textrm{b}}}
\DeclareMathOperator{\prob}{Prob}
\newcommand{\pl}{p_{\lambda}}
\newcommand{\Td}{T_{\textrm{Doi}}}
\newcommand{\Ts}{T_{\textrm{Smol}}}
\newcommand{\vq}{\vec{q}}
\newcommand{\vqa}{\vec{q}^{a}}
\newcommand{\vqb}{\vec{q}^{b}}
\newcommand{\vqc}{\vec{q}^{c}}
\DeclareMathOperator{\card}{card}
\def\R{\mathbb{R}}
\newcommand{\Z}{\mathbb{Z}}
\newcommand{\til}[1]{\widetilde{#1}} 
\DeclareMathOperator{\sech}{sech}
\begin{document}

\title{A Comparison of Bimolecular Reaction Models for Stochastic
  Reaction Diffusion Systems}



\author{I. C. Agbanusi\thanks{Department of Mathematics and
    Statistics, Boston University, 111 Cummington St., Boston, MA
    02215 (agbanusi@math.bu.edu)} \, and S. A. Isaacson\thanks{
    Department of Mathematics and Statistics, Boston University, 111
    Cummington St., Boston, MA 02215 (isaacson@math.bu.edu)}}

\maketitle

\begin{abstract}
  Stochastic reaction-diffusion models have become an important tool
  in studying how both noise in the chemical reaction process and the
  spatial movement of molecules influences the behavior of biological
  systems. There are two primary spatially-continuous models that have
  been used in recent studies; the diffusion limited reaction model of
  Smoluchowski, and a second approach popularized by Doi. Both models
  treat molecules as points undergoing Brownian motion.  The former
  represents chemical reactions between two reactants through the use
  of reactive boundary conditions, with two molecules reacting
  instantly upon reaching a fixed separation (called the
  reaction-radius).  The Doi model uses reaction potentials, whereby
  two molecules react with a fixed probability per unit time,
  $\lambda$, when separated by less than the reaction radius. In this
  work we study the rigorous relationship between the two models.  For
  the special case of a protein diffusing to a fixed DNA binding site,
  we prove that the solution to the Doi model converges to the
  solution of the Smoluchowski model as $\lambda \to \infty$, with a
  rigorous $O(\lambda^{-\frac{1}{2} + \epsilon})$ error bound (for any
  fixed $\epsilon > 0$).  We investigate by numerical simulation, for
  biologically relevant parameter values, the difference between the
  solutions and associated reaction time statistics of the two models.
  As the reaction-radius is decreased, for sufficiently large but
  fixed values of $\lambda$, these differences are found to increase
  like the inverse of the binding radius.
\end{abstract}

\section{Introduction}
Stochastic reaction-diffusion models have become a popular tool for
modeling biological systems in which both noise in the chemical
reaction process and the spatial diffusion of molecules play an
important roll. Such models have been used in a multitude of recent
studies, examining the dynamics of synaptic
transmission~\cite{BartolMCellModel2012}; the MinCDE system in
bacteria~\cite{ElfIEESys04}; how proteins search for DNA binding
sites~\cite{IsaacsonPNAS2011}; and studies of signaling in the
cell membrane~\cite{Dushek2011}.  There are three primary
stochastic reaction-diffusion models that these studies have made use
of; the diffusion limited reaction model of
Smoluchowski~\cite{SmoluchowskiDiffLimRx,KeizerJPhysChem82}, what we
call the Doi
model~\cite{TeramotoDoiModel1967,DoiSecondQuantA,DoiSecondQuantB}, and
the reaction diffusion master equation
(RDME)~\cite{GardinerRXDIFFME,ErbanRDMEReview,OthmerRDME2012}.

In the Doi and Smoluchowski models, the positions of molecules are
represented as points undergoing Brownian motion.  Bimolecular
reactions between two molecules in the Doi model occur with a fixed
probability per unit time when two reactants are separated by
\emph{less} than some specified ``reaction radius''.  The Smoluchowski
model differs by representing bimolecular reactions in one of two
ways; either occurring instantaneously, or with fixed probability per
unit time, when two reactants' separation is \emph{exactly} the
reaction-radius~\cite{SmoluchowskiDiffLimRx,KeizerJPhysChem82}.  In
this work we focus on the former case (often called a pure absorption
reaction). For both models unimolecular reactions represent internal
processes. They are assumed to occur with exponentially distributed
times based on a specified reaction-rate constant. For general
chemical systems, both the Doi and Smoluchowski models can be
described by, possibly infinite, systems of partial integral
differential equations (PIDEs) for the probability densities of
having a given number of each chemical species and the corresponding
locations of each molecule.

The RDME is spatially discrete, and given by a, possibly infinite,
system of ODEs for the numbers of each chemical species located at
each lattice site. It can be interpreted as an extension of the
non-spatial chemical master equation
(CME)~\cite{GardinerRXDIFFME,McQuarrieJAppProb,GardinerHANDBOOKSTOCH,VanKampenSTOCHPROCESSINPHYS}
model for stochastic chemical kinetics.  \emph{Formally,} the RDME has
been shown to be an approximation to both the Doi
model~\cite{IsaacsonRDMENote} and the Smoluchowski
model~\cite{IsaacsonRDMELims,IsaacsonRDMELimsII,ElfPNASRates2010,Hellander:2012jk}
for appropriately chosen lattice spacings. These approximations break
down for systems involving bimolecular reactions,
in two or more dimensions, \emph{in the limit} that the lattice
spacing approaches zero~\cite{IsaacsonRDMELims,Hellander:2012jk}.
(Recently we have suggested a new convergent RDME (CRDME) that
converges to the Doi model as the lattice spacing is taken to
zero~\cite{IsaacsonCRDME12}.)

There are a plethora of numerical methods and simulation packages that
have been developed to study biological systems based on one of the
Smoluchowski, Doi, or RDME models. These include the $\lambda$-$\rho$
method~\cite{ErbanChapman2009, ErbanChapman2011}, the
CRDME~\cite{IsaacsonCRDME12}, the
FPKMC~\cite{DonevJCP2010,WoldeEgfrdPNAS2010}, MCELL~\cite{Bartol2008},
MesoRD~\cite{MesoRDFange2012}, Smoldyn~\cite{AndrewsBrayPhysBio2004},
STEPS~\cite{STEPSHepburn2012dg}, and URDME~\cite{URDME2012}. While
these numerical methods and software packages have been used in many
modeling efforts, it is still an open question how exactly the
underlying mathematical models they approximate are rigorously
related. Moreover, to compare the results of modeling studies it would
be helpful to understand how to choose parameters in the Doi (resp.
Smoluchowski) model to accurately approximate the Smoluchowski (resp.
Doi) model.

To address these questions, we investigate the rigorous relationship
between the Doi and (pure absorption) Smoluchowski models. We begin in
the next section by giving a brief overview of the general Doi and
Smoluchowski models for the bimolecular reaction $\textrm{A} +
\textrm{B} \to \textrm{C}$. (The two models are identical for systems
with only first or zeroth order reactions.) We then consider the
special case of a single protein searching for a DNA binding site
within the nucleus of a eukaryotic cell in Section~\ref{S:radSymEqs}.
The binding site is located at the origin, and the nucleus is modeled
as a concentric sphere of radius $R$.  With the further assumption
that the initial distribution of the protein's position is rotationally
invariant, the Doi and Smoluchowski models can be simplified to
spherically-symmetric diffusion equations. For the Doi model the
diffusing protein is allowed to bind with probability per unit time
$\lambda$ when within a reaction-radius, $\rb$, of the binding site.
This leads to a ``reaction potential'' in the PDE for the Doi model.
In the Smoluchowski model the protein reacts instantly upon reaching a
separation $\rb$ from the binding site. This leads to the replacement
of the reaction potential with a zero Dirichlet boundary condition on
the sphere of radius $\rb$ around the origin.

Denote by $\pl(r,t)$ the spherically symmetric probability density
that the diffusing molecule is $r$ from the origin at time $t$ in the
Doi model, and by $\rho(r,t)$ the corresponding probability density in
the Smoluchowski model.  In Section~\ref{S:numerics} we numerically
calculate the difference between $\pl(r,t)$ and $\rho(r,t)$ for
biologically relevant parameter values.  We find that $\pl(r,t) \to
\rho(r,t)$ uniformly in $r$ and $t$ as $\lambda \to \infty$, with an
empirical convergence rate that is $O(\lambda^{-\frac{1}{2}})$. The
same empirical convergence rate is observed for the corresponding
binding time distributions and mean binding times of the two models.
For sufficiently large, but fixed, values of $\lambda$, as $\rb$ is
decreased the difference between the probability densities, binding
time distributions, and mean binding times increase like $\rb^{-1}$.

These results motivate our studies in Section~\ref{S:convResults},
where we rigorously prove the convergence of $\pl(r,t)$ to $\rho(r,t)$
as $\lambda \to \infty$ with an $O(\lambda^{-\frac{1}{2} + \epsilon})$
error bound (for all $\epsilon > 0$). Let $\mu_n$ denote the $n$th
eigenvalue of the generator of the Doi model
(see~\eqref{eq:doiEigfuncEq}), with $\alpha_n$ the $n$th eigenvalue of
the generator of the Smoluchowski model
(see~\eqref{eq:smolEigfuncEq}). Our approach is to first prove that
for $\lambda$ sufficiently large, if $\alpha_n \leq M(\lambda)$ with
$M(\lambda)$ a specified increasing function of $\lambda$, then
\begin{equation*}
  \abs{\alpha_n - \mu_n} \leq \frac{C}{\lambda^{\frac{1}{2} - \epsilon}}
\end{equation*}
for any fixed $\epsilon > 0$.  The precise statement of this result is
given in Theorem~\ref{thm:EigenConv}. This theorem is then used to
show the corresponding eigenfunction convergence result in
Lemma~\ref{lem:eigen_func_1}.  Denote by $\psi_n(r)$ the $n$th
eigenfunction of the generator of the Doi
model~\eqref{eq:doiEigfuncEq}, and $\phi_n(r)$ the corresponding
eigenfunction of the Smoluchowski model~\eqref{eq:smolEigfuncEq}.  We
prove for $\lambda$ sufficiently large and $\mu_n < M(\lambda)$ that
\begin{equation*}
  \sup_{r \in \brac{\rb,R}} \abs{\phi_n(r) - \psi_n(r)} \leq \frac{C}{\lambda^{\frac{1}{2} - \epsilon}}.
\end{equation*}
Finally, the preceding two results are used to prove that
\begin{equation} \label{eq:unifConvRate}
  \sup_{t \in \paren{\delta,\infty}} \sup_{r \in \paren{\rb,R}} 
  \abs{ \pl(r,t) - \rho(r,t) }
  \leq \frac{C}{\lambda^{\frac{1}{2} - \epsilon}},
\end{equation}
for any $\epsilon > 0$ and $\delta > 0$ fixed.  The precise statement
of this result is given in Theorem~\ref{thm:densityConv}.

It should be noted that the approximation of Dirichlet boundary
conditions by reaction potentials is a well-studied problem in the
context of the large-coupling limit in quantum
physics~\cite{GlimmJaffeBook}.  Our approach of proving convergence
through successive eigenvalue, eigenfunction, and PDE solution
estimates differs from the more standard resolvent and path integral
estimates~\cite{TaylorPDEsV2,Demuth:1980tj,BelhadjLargeCouplingRev11}.
A similar convergence rate of the Doi model solution to the
Smoluchowski model solution was proven in $L^2(\R^3)$
in~\cite{Demuth:1993ug} (as opposed to the uniform convergence
rate~\eqref{eq:unifConvRate} we prove in a spherical domain).  Denote
by $\ind_{\brac{0,\rb}}(r)$ the indicator function of the interval,
$\brac{0,\rb}$. Convergence rates for eigenvalues of the general
one-dimensional operator $-\DD{}{x} + V(x) + \lambda W(x)$ as $\lambda
\to \infty$, for $x \in \R$, were proven
in~\cite{Simon1DLargeCoupling1988ua}. In contrast, in
Section~\ref{S:convResults} we study the spherically symmetric
operator arising in the Doi model, $-\DD{}{r} - \frac{2}{r} \D{}{r} +
\lambda \ind_{\brac{0,\rb}}(r)$ for $r \in \left[ 0, R \right)$ with
a Neumann boundary condition at $R$, directly.

\section{General Doi and Smoluchowski Models}
\label{S:mathModelFormulation} We first illustrate how the bimolecular
reaction $\textrm{A} + \textrm{B} \to \textrm{C}$ would be described
by the Doi and Smoluchowski models in $\R^3$.  In the Doi model, bimolecular
reactions are characterized by two parameters; the separation at which
molecules may begin to react, $\rb$, and the probability per unit time
the molecules react when within this separation, $\lambda$.  In the
Doi and Smoluchowski models, when a molecule of species $\textrm{A}$ and a
molecule of species $\textrm{B}$ react we assume the $\textrm{C}$
molecule they produce is placed midway between them.

We now formulate the Doi model as an infinite coupled system of PIDEs.
Let $\vqa_{l} \in \R^3$ denote the position of the $l$th molecule of
species $\textrm{A}$ when the total number of molecules of species
$\textrm{A}$ is $a$. The state vector of the species $\textrm{A}$
molecules is then given by $\vqa = (\vqa_{1},\dots,\vqa_{a}) \in
\R^{3a}$.  Define
$\vqb$ and $\vqc$ similarly.
We denote by $f^{(a,b,c)}(\vqa,\vqb,\vqc,t)$ the probability density
for there to be $a$ molecules of species $\textrm{A}$, $b$ molecules
of species $\textrm{B}$, and $c$ molecules of species $\textrm{C}$ at
time $t$ located at the positions $\vqa$, $\vqb$, and $\vqc$.
Molecules of the same species are assumed indistinguishable.  In the
Doi model the evolution of $f^{(a,b,c)}$ is given by
\begin{equation} \label{eq:doiFormEvolEq}
    \PD{f^{(a,b,c)}}{t}\paren{\vqa,\vqb,\vqc,t} = \paren{L + R} f^{(a,b,c)}\paren{\vqa,\vqb,\vqc,t}.
\end{equation}
Note, with the subsequent definitions of the operators $L$ and $R$
this will give a coupled system of PIDEs over all possible values of
$(a,b,c)$. 
More general systems that allow unbounded production of certain
species would result in an infinite number of coupled PIDEs.  The
diffusion operator, $L$, is defined by
\begin{equation} \label{eq:doiFormLapOp}
  \begin{aligned}
  L f^{(a,b,c)}\paren{\vqa,\vqb,\vqc,t} &=
  \paren{D^{\textrm{A}} \sum_{l=1}^{a} \Delta_{l}^{a}
  + D^{\textrm{B}} \sum_{m=1}^{b} \Delta_{m}^{b} + D^{\textrm{C}} \sum_{n=1}^{c} \Delta_{n}^{c}}  f^{(a,b,c)}\paren{\vqa,\vqb,\vqc,t},
\end{aligned}
\end{equation}
where $\Delta_{l}^{a}$ denotes the Laplacian in the coordinate
$\vqa_{l}$ and $D^{\textrm{A}}$ the diffusion constant of species
$\textrm{A}$. $D^{\textrm{B}}$, $D^{\textrm{C}}$, $\lap_{m}^{b}$, and
$\lap_{n}^{c}$ are defined similarly.

To define the reaction operator, $R$, we must introduce notations to
represent removing or adding a specific molecule to the state $\vqa$.
Let
\begin{align*}
  \vqa \setminus \vqa_{l} = \paren{\vqa_{1},\dots,\vqa_{l-1},\vqa_{l+1},\dots,\vqa_{a}}, &&   \vqa \cup \vq = \paren{\vqa_{1},\dots,\vqa_{a},\vq}.
\end{align*}
$\vqa \setminus \vq$ will denote $\vqa$ with any one component with
the value $\vq$ removed. 
Denote by $\ind_{\brac{r \leq \rb}}(r)$ the indicator function of the
interval $\brac{0,\rb}$, and let $B_{l}^{c} = \{ \vq \in \R^{3} |
\abs{\vq-\vqc_{l}} \leq \rb/2\}$ label the set of points a reactant
could be at to produce a molecule of species $\textrm{C}$ at $\vqc_l$.
The Doi reaction operator, $R$, is then
\begin{multline} \label{eq:doiRxOp}
    (R f^{(a,b,c)})\paren{\vqa,\vqb,\vqc,t} = 
    \lambda \Bigg[ \sum_{l=1}^{c}  
    \int_{\vq \in B_{l}^{C}} f^{(a+1,b+1,c-1)} \paren{\vqa \cup \vq, \vqb \cup \paren{2 \vqc_{l} - \vq}, \vqc \setminus \vqc_{l}, t} \, dB_{l}^{c} \\
    - \sum_{l=1}^a \sum_{l'=1}^b 
    \ind_{\brac{0,\rb}} \big( \big| \vqa_l-\vqb_{l'} \big| \big) f^{(a,b,c)}(\vqa,\vqb,\vqc,t) \Bigg].
\end{multline}
In the Fock space representation
of~\cite{DoiSecondQuantA,DoiSecondQuantB}, the term $R f^{(a,b,c)}$ is
called a reaction potential~\cite{DoiSecondQuantB}. 

The Smoluchowski model would modify~\eqref{eq:doiFormEvolEq} by adding
a reactive boundary condition and changing the reaction
operator~\eqref{eq:doiRxOp}. The reactive boundary condition, often
called a ``pure absorption'' boundary condition, is the Dirichlet
boundary condition that
\begin{equation} \label{eq:scdlrRxBC}
  f^{(a,b,c)}\paren{\vqa,\vqb,\vqc,t} = 0, \quad \text{if } \abs{\vqa_{l}-\vqb_{m}} = \rb, \text{ for any } l,m.
\end{equation}
We define by $S_{l}^{c} = \{ \vq \in \R^{3} | \abs{\vq-\vqc_{l}} = \rb
/2\}$ the set of points reactants could be at to produce a molecule of
species $\textrm{C}$ at $\vqc_{l}$. For $\vq \in S_{l}^{c}$, denote by
$J^{(a+1,b+1,c-1)}_{l}\paren{\vqa \cup \vq, \vqb \cup \paren{2
    \vqc_{l} - \vq}, \vqc \setminus \vqc_{l}, t}$ the diffusive flux
along the inward normal to the surface $S_{l}^{c}$ from
$f^{(a+1,b+1,c-1)}\paren{\vqa \cup \vq, \vqb \cup \paren{2 \vqc_{l} -
    \vq}, \vqc \setminus \vqc_{l}, t}$, immediately prior to a
reaction producing $\vqc_{l}$.  The reaction-operator, $R$, is then
\begin{equation} \label{eq:scdlrRxOp}
  (R f^{(a,b,c)})\paren{\vqa,\vqb,\vqc,t} =  \sum_{l=1}^{c}  
  \int_{\vq \in S_{l}^{C}} J^{(a+1,b+1,c-1)}_{l} \paren{\vqa \cup \vq, \vqb \cup \paren{2 \vqc_{l} - \vq}, \vqc \setminus \vqc_{l}, t} \, dS_{l}^{c}.
\end{equation}

Note, in the Smoluchowski model if the molecules are instead assumed to react
with some fixed probability per unit time upon collision, a ``partial
absorption'' Robin boundary condition would be used,
see~\cite{KeizerJPhysChem82, ElfPNASRates2010}.  We do not consider
this more general reaction mechanism in this work.

\section{Diffusion of a Protein to a Fixed DNA Binding Site} \label{S:radSymEqs}
To study the rigorous relationship between the Doi and Smoluchowski
models we investigate the special case of the chemical reaction
$\textrm{A} + \textrm{B} \to \varnothing$, with only one molecule of
species $\textrm{A}$ and one molecule of species $\textrm{B}$. We
further assume the molecule of species $\textrm{B}$ is located at the
origin and stationary ($D^{\textrm{B}} = 0$). The molecule of species
\textrm{A} is assumed to move within a sphere of radius $R$ centered
on the \textrm{B} molecule. We denote the diffusion constant of the
\textrm{A} molecule by $D$ and the reaction radius by $\rb$.  While
idealized, this special case can be interpreted as a model for the
diffusion of a DNA binding protein to a fixed DNA binding site
(located at the center of a nucleus).

We now formulate the Doi and Smoluchowski models in this special case,
with the additional assumption of spherical symmetry. This assumption
will hold whenever the initial distribution of the \textrm{A} molecule
is spherically-symmetric about the origin. Denote by $\rho(r,t)$ the
spherically-symmetric probability density that the \textrm{A} molecule
is a distance $r$ from the origin and has not reacted with the
\textrm{B} molecule at time $t$. Then the Smoluchowski
model~\eqref{eq:doiFormEvolEq}, \eqref{eq:doiFormLapOp},
\eqref{eq:scdlrRxBC}, \eqref{eq:scdlrRxOp} reduces to
\begin{align} \label{eq:radpureSmol}
  \PD{\rho}{t} &= D \Delta_{r} \rho, \quad \rb < r < R, \, t > 0,
\end{align}
where $\Delta_r$ denotes the spherically symmetric Laplacian in
three-dimensions,
\begin{equation*}
\Delta_r \equiv \frac{1}{r^2}\PDO{r}\paren{r^2\PDO{r}}.
\end{equation*}
This equation is coupled with the reactive Dirichlet boundary
condition, and zero Neumann boundary condition (so that the molecule
remains in the ``nucleus'')
\begin{align*}
  \rho(\rb,t) &= 0, && \PD{\rho}{r}(R,t) = 0, \quad t > 0.
\end{align*}
Finally, we denote by $g(r)$ the initial condition, $\rho(r,0) =
g(r)$, with the normalization $\int_{\rb}^{R}g(r) r^2 \, dr = 1$.

Let $\pl(r,t)$ label the corresponding spherically-symmetric probability
density for the Doi model. In the special case we are considering, the
PIDEs for the Doi model~\eqref{eq:doiFormEvolEq},
\eqref{eq:doiFormLapOp}, \eqref{eq:doiRxOp} reduce to
\begin{align} \label{eq:radpureDoi}
  \PD{\pl}{t} &= D \Delta_{r} \pl - \lambda \, \ind_{\brac{0,\rb}}(r) \, \pl(r,t), \quad 0 \leq r < R, \, t > 0,
\end{align}
with the Neumann boundary condition,
\begin{align*}
  \PD{\pl}{r}(R,t) = 0, \quad t > 0,
\end{align*}
and the initial condition that 
\begin{align*}
  \pl(r,0) = \tilde{g}(r) = 
  \begin{cases}
    g(r),  &r > \rb,\\
    0, &r \leq \rb.
  \end{cases}
\end{align*}

For simplicity, in what follows, we assume $D=1$.
Equations~\eqref{eq:radpureSmol} and~\eqref{eq:radpureDoi} can be
solved explicitly by separating variables. In
solving~\eqref{eq:radpureDoi} we impose continuity of the function and
its derivative across the surface of discontinuity as justified by the
results of~\cite{Girsanov:1960dc} and~\cite{Olenik:1959dc}. The
computations are standard so we give only the final results.  Denote
by $(u(r),v(r))=\int_{0}^{R} {u(r)v(r)r^2} \,dr$ the usual $L^2$ inner
product. We can then write the solutions as
\begin{equation} \label{eq:smolEigFuncExpan}
\rho(r,t) = \sum_{n=1}^{\infty}{a_n(\tilde{g}(r),\phi_{n}(r))\phi_{n}(r)e^{-\alpha_{n}t}}
\end{equation}
and 
\begin{equation} \label{eq:doiEigFuncExpan}
  \pl(r,t) = \sum_{n=1}^{\infty}{b_n(\tilde{g}(r),\psi_{n}(r))\psi_{n}(r)e^{-\mu_{n}(\lambda)t}}.
\end{equation}
Here
\begin{equation*}
  \phi_n(r) =\frac{1}{r}\brac{\frac{\sin(\sqrt{\alpha_n}(R-r))}{R\sqrt{\alpha_n}}-\cos(\sqrt{\alpha_n}(R-r))},\quad \rb<r<R
\end{equation*}
are the eigenfunctions for the Smoluchowski model
\eqref{eq:radpureSmol}, satisfying
\begin{align} \label{eq:smolEigfuncEq}
  -\Delta_r \phi_n(r) &= \alpha_n \phi_n(r), \quad \rb < r < R,
\end{align}
with the boundary conditions $\phi_n(\rb) = 0$ and $\PD{\phi_n}{r}(R)
= 0$.  We extend these functions to $\brac{0,R}$ by defining
$\phi_n(r) = 0$ for $r \in \brac{0,\rb}$.  The corresponding
eigenvalues, $\alpha_{n}$, solve the equation $f(\alpha_n)=0$, where
\begin{equation} 
  f(\mu) = \frac{R\sqrt{\mu}-\tan(\sqrt{\mu}(R-\rb))}{R\mu \tan(\sqrt{\mu}(R-\rb))+\sqrt{\mu}}.
\end{equation}

The eigenfunctions for the Doi model~\eqref{eq:radpureDoi} are given by
\begin{equation*}
  \psi_n(r) =
    \begin{cases}
    \psi^{in}_{n}(r), & 0<r<\rb,\\
    \psi^{out}_{n}(r), & \rb \leq r<R,
    \end{cases}
\end{equation*}
where
\begin{align}
  \psi^{in}_{n}(r) =
    \begin{cases}
     \displaystyle\frac{\tfrac{1}{R\sqrt{\mu_n}}\sin(\sqrt{\mu_n}(R-\rb))-\cos(\sqrt{\mu_n}(R-\rb))}{\sinh(\rb\sqrt{\lambda-\mu_n})}\paren{\frac{\sinh(r\sqrt{\lambda-\mu_n})}{r}}, &\mu_n<\lambda,\\
     \displaystyle\frac{\tfrac{1}{R\sqrt{\mu_n}}\sin(\sqrt{\mu_n}(R-\rb))-\cos(\sqrt{\mu_n}(R-\rb))}{\sin(\rb\sqrt{\mu_n-\lambda})}\paren{\frac{\sin(r\sqrt{\mu_n-\lambda})}{r}}, &\mu_n>\lambda,
     \end{cases}
\end{align}
and
\begin{equation}
\psi^{out}_{n}(r) =\frac{1}{r}\brac{\frac{\sin(\sqrt{\mu_n}(R-r))}{R\sqrt{\mu_n}}-\cos(\sqrt{\mu_n}(R-r))}.
\end{equation}
They satisfy the equation
\begin{align} \label{eq:doiEigfuncEq}
  -\Delta_r \psi_n(r) + \lambda \ind_{\brac{0,\rb}} \psi_n(r) &= \mu_n \psi_n(r), \quad 0 \leq r < R,
\end{align}
with the Neumann boundary condition $\PD{\psi_n}{r}(R) = 0$.

The Doi eigenvalues, $\mu_{n}$, solve the equation $f(\mu_n) = A(\mu_n,\lambda)$ with 
\begin{align}
  A(\mu,\lambda)= 
  \begin{cases}
   \displaystyle \sqrt{\frac{1}{\lambda-\mu}}\tanh\paren{\sqrt{\lambda-\mu}\rb}, & \mu<\lambda,\\
   \displaystyle \sqrt{\frac{1}{\mu - \lambda}}\tan\paren{\sqrt{\mu-\lambda}\rb}, & \mu>\lambda.
  \end{cases}
\end{align}
We denote the dependence of the eigenvalues on $\lambda$ by
$\mu_n(\lambda)$.  The constants $a_n$ and $b_n$ are given
by
\begin{align*}
 a_n &=\frac{1}{(\phi_n(r),\phi_n(r))}, &
 b_n &=\frac{1}{(\psi_n(r),\psi_n(r))}.
\end{align*}

\section{Difference Between Smoluchowski and Doi Models for
  Biologically Relevant Parameters} \label{S:numerics} 
If we interpret~\eqref{eq:radpureSmol} and~\eqref{eq:radpureDoi} as
models for the diffusion of a protein that has just entered the
nucleus ($r_0 = R$) to a DNA binding site, then typically $\rb$ would
be between $.1$ and $10 \,
\textrm{nm}$~\cite{KuhnerLexADNABond,Dushek2011,AndrewsBrayPhysBio2004},
$R$ between $1$ and $10 \, \mu m$, and $D$ between $1$ and $20 \, \mu
\textrm{m}^2 \textrm{s}^{-1}$.  In the following we assume that all
spatial units are in micrometers and time is in seconds, with $R = r_0
= 1 \, \mu \textrm{m}$, $D = 10 \, \mu \textrm{m}^2 \textrm{s}^{-1}$,
and $\lambda$ having units of $\textrm{s}^{-1}$. We also assume that
$\pl(r,0) = \rho(r,0) = \delta(r - r_0) / 4 \pi r^2$.  $\pl$ and
$\rho$ are then the same as in the previous section, but rescaled by
$(4 \pi)^{-1}$.

\begin{figure}[t]
  \centering
  \subfloat[]{
    \label{fig:diffEqDensityPlot}
    \scalebox{.55}{\includegraphics{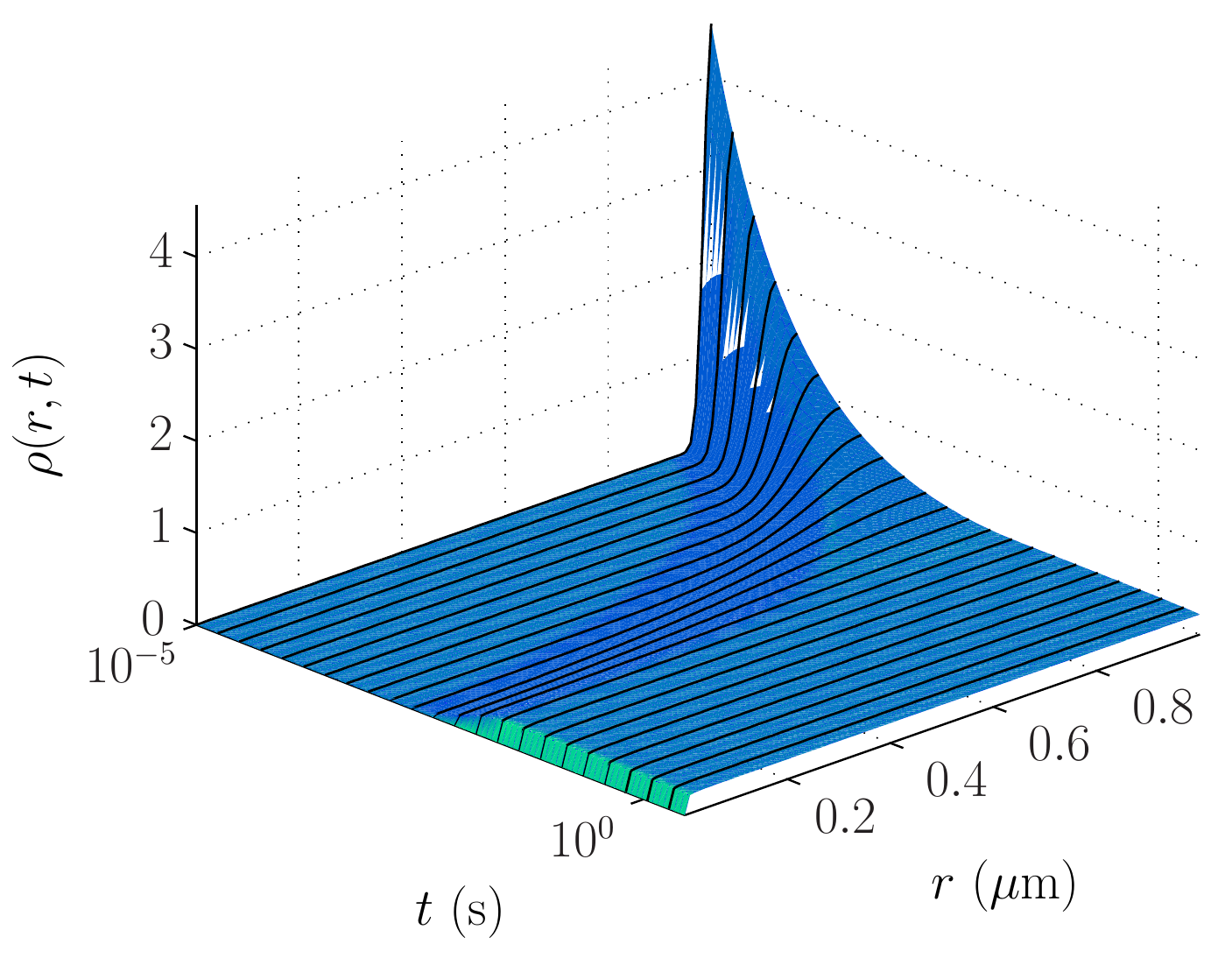}}
  }
  \subfloat[]{
    \label{fig:diffEqDensityErr}
    \scalebox{.55}{\includegraphics{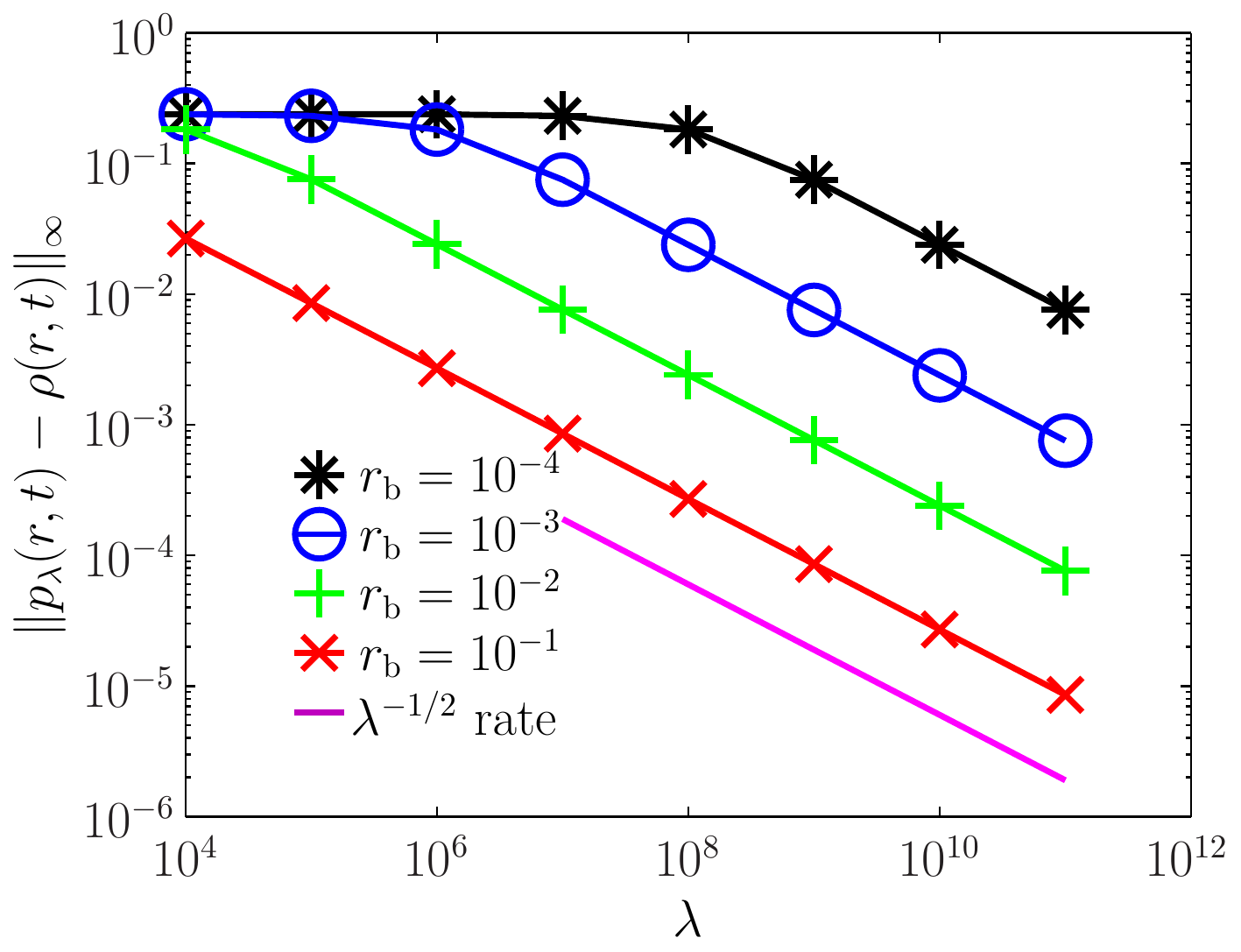}}
  }
  \caption{(a) Solution to the Smoluchowski
    model~\eqref{eq:radpureSmol} for $\rb = 10^{-3} \mu \textrm{m}$.
    Note that the $t$-axis uses a logarithmic scale. (b) Absolute difference
    in $\pl(r,t)$ and $\rho(r,t)$ as $\lambda$ and $\rb$ are varied. }
\end{figure}
We numerically evaluated $\pl(r,t)$ and $\rho(r,t)$ in MATLAB using
the eigenfunction expansions~\eqref{eq:doiEigFuncExpan}
and~\eqref{eq:smolEigFuncExpan}. The series were truncated at the
first term with magnitude smaller than $10^{-10}$. In evaluating these
series numerically it is necessary to calculate a number of the
eigenvalues $\mu_n$ and $\alpha_n$.  For each term of the
eigenfunction expansions the transcendental equations for the
corresponding Doi eigenvalue, $f(\mu_n) = A(\mu_n,\lambda)$, and the
Smoluchowski eigenvalue, $f(\alpha_n) = 0$, were solved to 25 digits
of precision using the Mathematica \texttt{Reduce} function.  In
Figure~\ref{fig:diffEqDensityPlot} we show the solution to the
Smoluchowski model~\eqref{eq:radpureSmol}, $\rho(r,t)$, when $\rb =
10^{-3} \mu \textrm{m}$. For short times the solution is localized
near $R$, while a boundary layer develops near $r = \rb$ as $t$
increases.  Figure~\ref{fig:diffEqDensityErr} shows the maximum
absolute difference between $\pl$ and $\rho$ for a discrete set of
points,
\begin{equation*}
\norm{\pl(r,t) - \rho(r,t)}_{\infty} \equiv \sup_{i} \sup_{j} \abs{\pl(r_i,t_j) - \rho(r_i,t_j)},
\end{equation*}
where $r_i$ and $t_j$ are given in Appendix~\ref{S:appendixB} by
Listings~\ref{lst:rpoints} and~\ref{lst:tpoints}.  For each fixed
value of $\rb$ we see that as $\lambda \to \infty$ the difference
between $\pl$ and $\rho$ converges to zero like $\lambda^{-1/2}$. As
$\rb$ is decreased, the absolute difference increases by approximately
an order of magnitude for each fixed value of $\lambda$ (for $\lambda$
sufficiently large).

\begin{figure}[t]
  \centering
  \subfloat[]{
    \label{fig:bindDistribPlot}
    \scalebox{.53}{\includegraphics{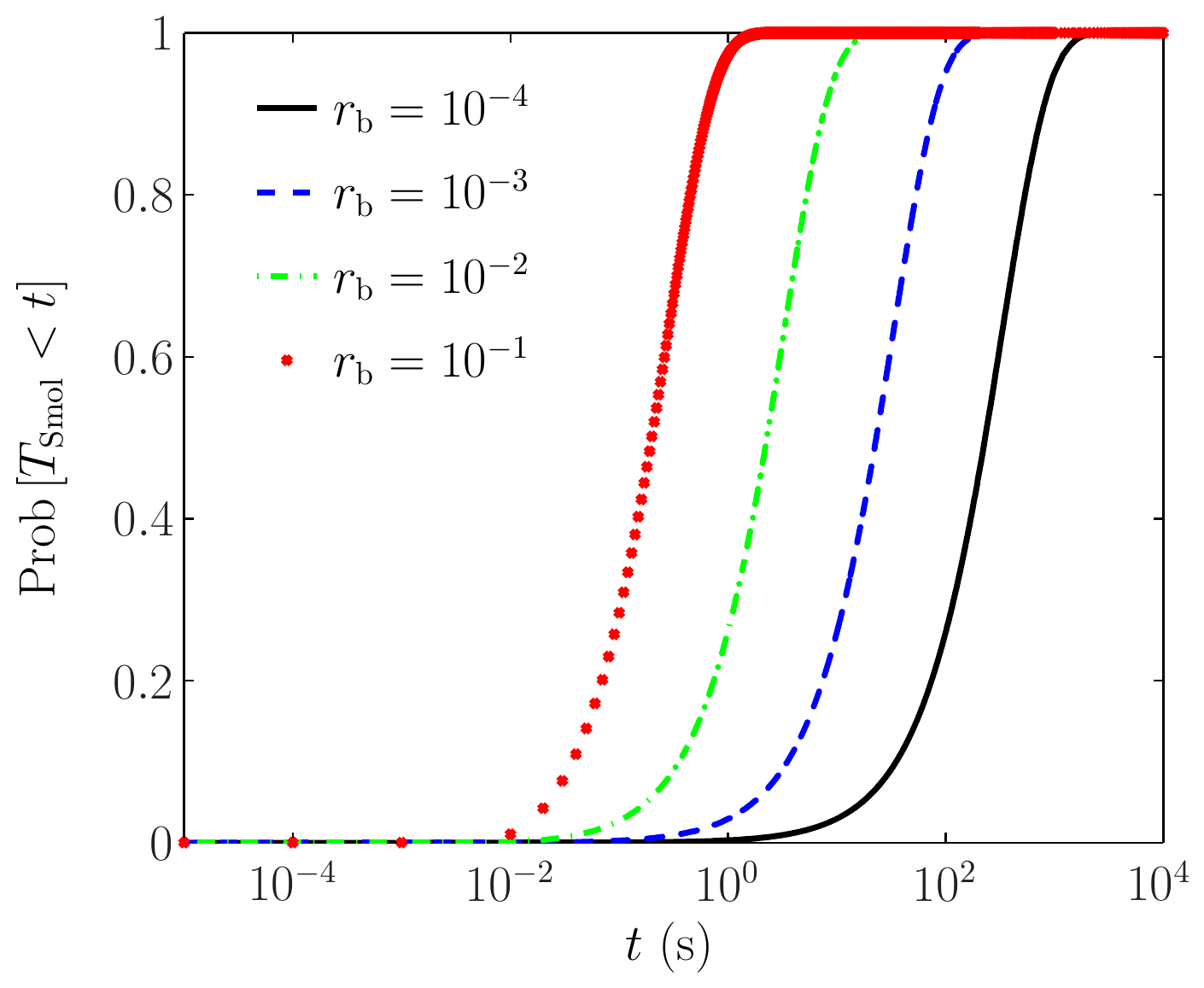}}
  }  
  \subfloat[]{
    \label{fig:bindDistribErr}
    \scalebox{.55}{\includegraphics{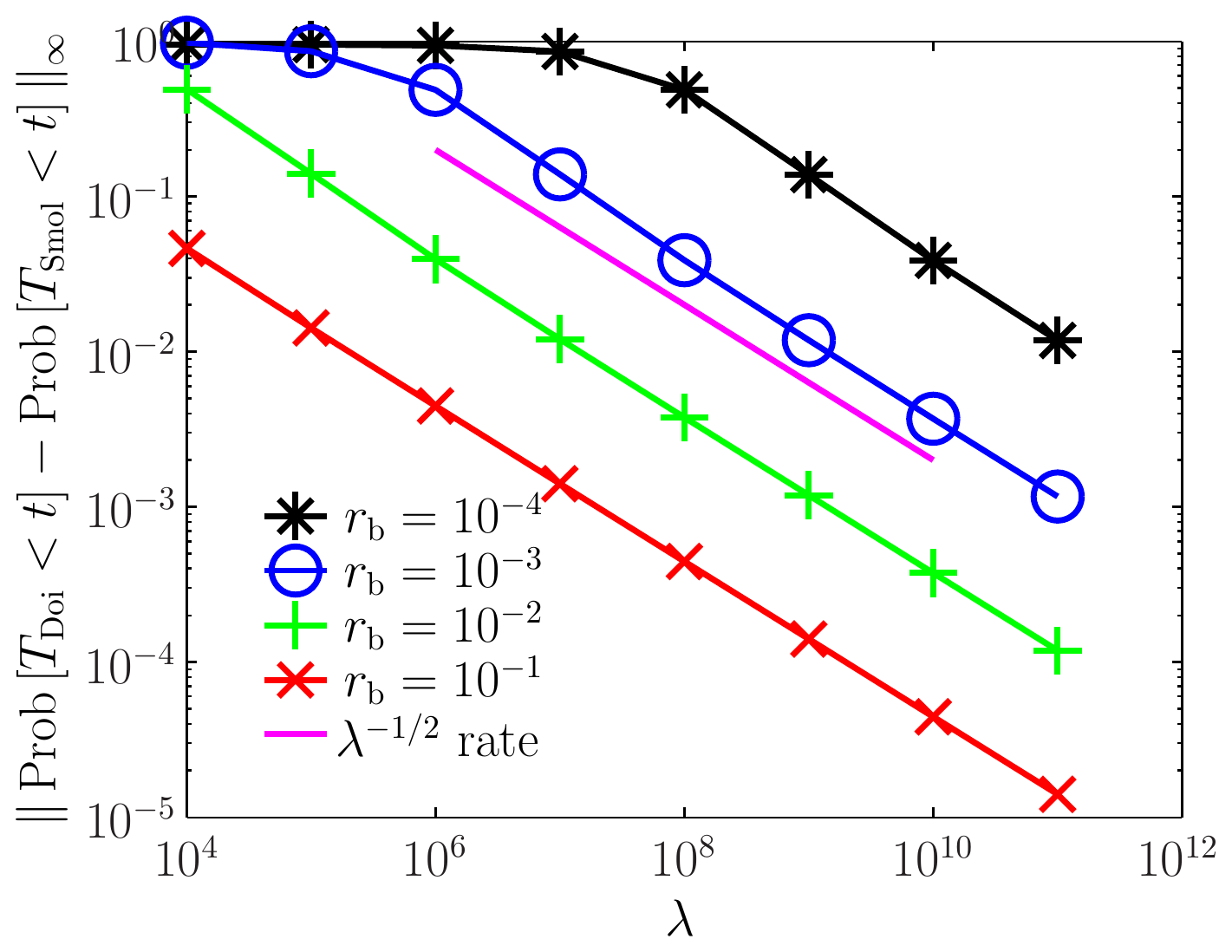}}
  }  
  \caption{(a) Smoluchowski binding time distributions for varying
    $\rb$. Note the $t$-axis is logarithmic. (b) Absolute difference
    in binding time distributions from the Smoluchowski and Doi models
    as $\lambda$ is varied. }
\end{figure}
For many biological models the statistics of the random variable, for
the time at which the diffusing molecule first binds to the binding
site, are of interest. For example, in~\cite{IsaacsonPNAS2011} we
studied how this time was influenced by volume exclusion due to the
spatially varying density of chromatin inside the nucleus of mammalian
cells. We subsequently denote these random variables by $\Td$ and
$\Ts$.  Statistics of these random variables can be calculated from
the cumulative distribution functions
\begin{equation*}
  \prob \brac{\Td < t} = 1 - 4 \pi \int_0^R \pl(r,t) r^2 \, dr,
\end{equation*}
and 
\begin{equation*}
  \prob \brac{\Ts < t} = 1 - 4 \pi \int_{\rb}^R \rho(r,t) r^2 \, dr.
\end{equation*}
We evaluated $\prob \brac{\Td < t}$ and $\prob \brac{\Ts < t}$ by
analytically integrating the eigenfunction
expansions~\eqref{eq:doiEigFuncExpan} and~\eqref{eq:smolEigFuncExpan}
and evaluating the truncated series in MATLAB. (Using the same method
as described above for evaluating $\pl(r,t)$ and $\rho(r,t)$.)

Figure~\ref{fig:bindDistribPlot} shows $\prob \brac{\Ts < t}$ for
varying $\rb$ and demonstrates a constant increase in the binding time
as $\rb$ is decreased (on a logarithmic scale).
Figure~\ref{fig:bindDistribErr} shows the absolute difference in
binding time distributions,
\begin{equation*}
  \norm{ \prob \brac{ \Td < t } - \prob \brac{\Ts < t}}_{\infty} 
  \equiv \sup_{t_j} \abs{ \prob \brac{ \Td < t_j } - \prob \brac{\Ts < t_j} },
\end{equation*}
where $t_j$ is given by Listing~\ref{lst:tpoints} in
Appendix~\ref{S:appendixB}. We again observe an empirical
$\lambda^{-\frac{1}{2}}$ convergence rate of $\prob \brac{\Td < t}$ to
$\prob \brac{\Ts < t}$ as $\lambda \to \infty$.  For a biologically
relevant binding radius of $10^{-3} \mu \textrm{m}$, when $\lambda =
10^{11} \textrm{s}^{-1}$ the absolute difference between the two
distributions is on the order of $10^{-3}$.

\begin{figure}[t]
  \centering
  \subfloat[]{
    \label{fig:meanBindTimePlot}
    \scalebox{.5}{\includegraphics{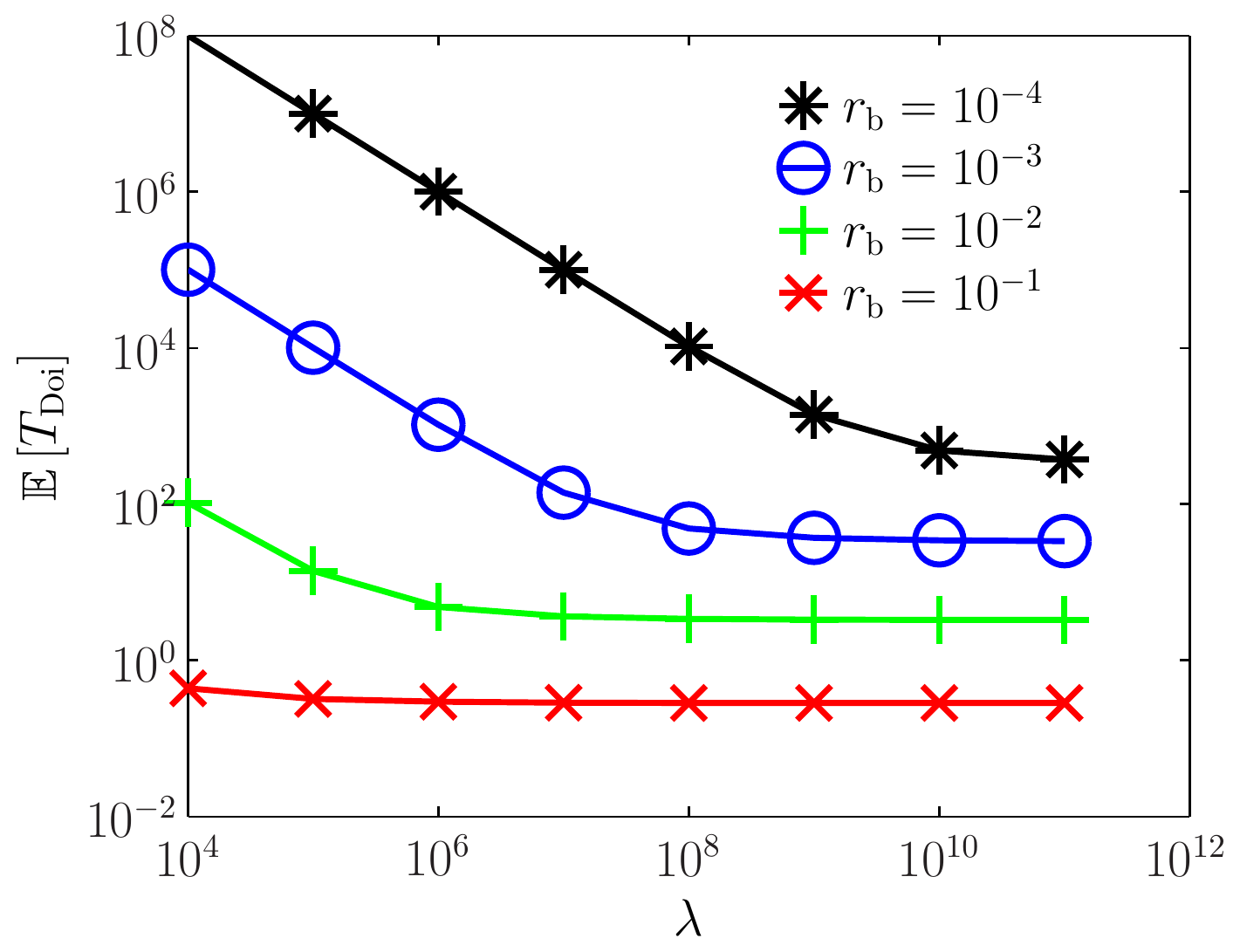}}
  }
  \subfloat[]{
    \label{fig:meanBindTimeErr}
    \scalebox{.5}{\includegraphics{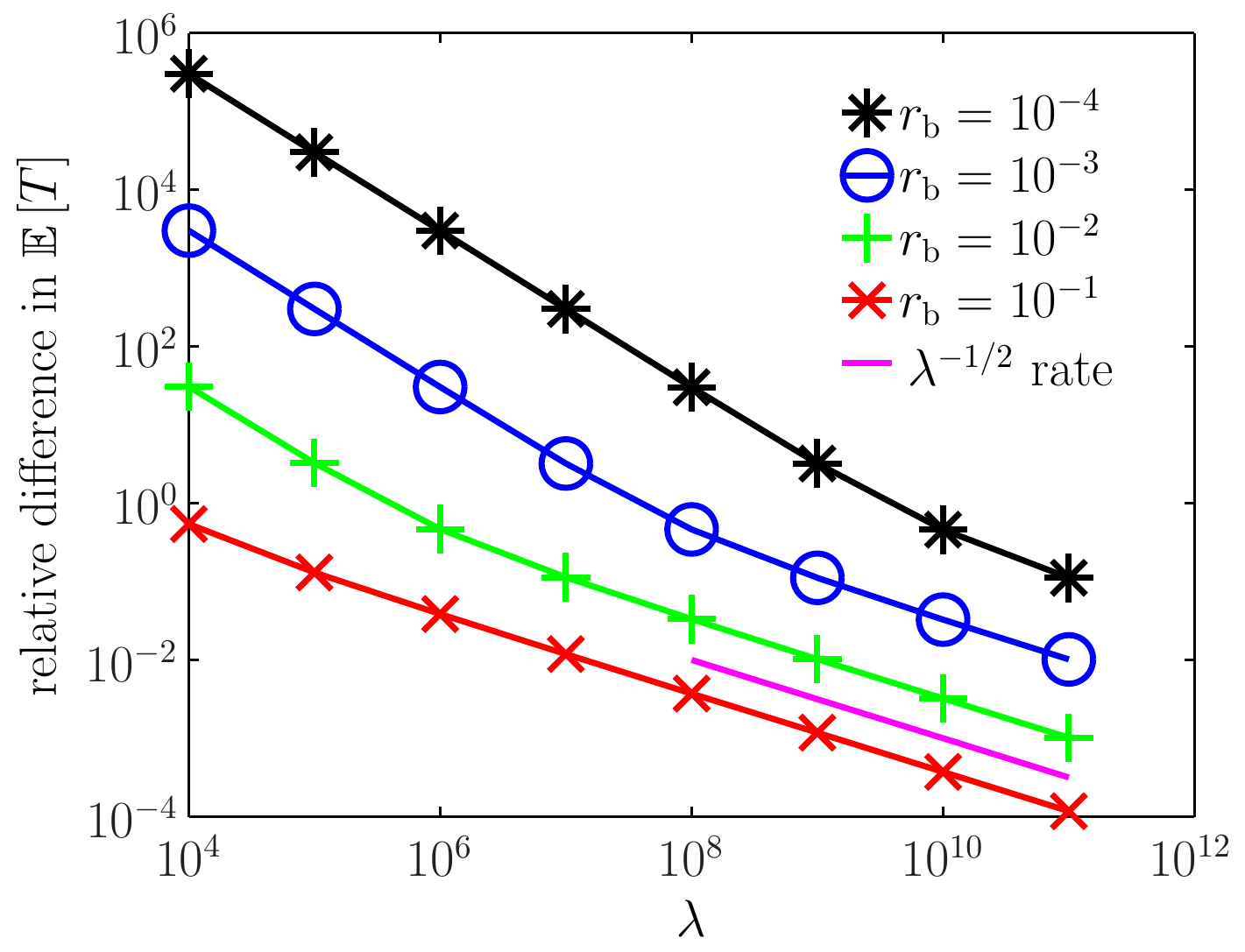}}
  }
  \caption{(a) Mean binding time of Doi model vs $\lambda$. (b)
    Relative difference in mean binding
    times~\eqref{eq:meanBindTimeRelErr}.}
\end{figure}
In addition to the probability densities and binding time
distributions, we also examined the mean time for the diffusing
protein to find the binding site (when starting a distance $r_0$ from
the origin).  The mean binding time for the Doi model can be found in
a simple closed form by solving the corresponding mean first passage
time problem~\eqref{eq:radpureDoiAvg}, see Appendix~\ref{S:appendix},
and is given by
\begin{equation} \label{eq:doiMeanBindTimeSolut}
  \avg{\Td}(r_0) = \int_0^{\infty} \prob \brac{\Td < t} \, dt = 
  \begin{cases}
    u^{-}(r_0), \quad r_0 < \rb, \\
    u^{+}(r_0), \quad r_0 > \rb.
  \end{cases}
\end{equation}
Where, for $\hat{\lambda} = \lambda / D$, we have that
\begin{align*}
 u^{-}(r_0) &=\frac{1}{D\hat{\lambda}}+\paren{\frac{\sinh(\sqrt{\hat{\lambda}}r_0)}{r_0}}\brac{\frac{R^3-\rb^3}{3D\paren{\rb\sqrt{\hat{\lambda}}\cosh(\sqrt{\hat{\lambda}}\rb) -\sinh(\sqrt{\hat{\lambda}}\rb)}}},\\
 u^{+}(r_0) &=\frac{\rb^2-r_0^2}{6D}+ \frac{R^3}{3D}\brac{\frac{1}{\rb}-\frac{1}{r_0}} + \frac{1}{D\hat{\lambda}}+\paren{\frac{\sinh(\sqrt{\hat{\lambda}}\rb)}{\rb}}\brac{\frac{R^3-\rb^3}{3D\paren{\rb\sqrt{\hat{\lambda}}\cosh(\sqrt{\hat{\lambda}}\rb) -\sinh(\sqrt{\hat{\lambda}}\rb)}}}. 
\end{align*}
Similarly, the mean binding time in the Smoluchowski model can be found
by solving the mean first passage time
problem~\eqref{eq:radpureSmolAvg}, and is given by
\begin{equation} \label{eq:smolMeanBindTimeSolut}
\avg{\Ts}(r_0) = \frac{\rb^2-r_0^2}{6D}+ \frac{R^3}{3D}\brac{\frac{1}{\rb}-\frac{1}{r_0}}, \quad r_0 > \rb.
\end{equation}
Figure~\ref{fig:meanBindTimePlot} shows the mean binding time in the
Doi model as $\lambda$ is varied for $r_0 = R$.

The difference between the two mean binding times, for $r_0 > \rb$, is
then given by
\begin{align} \label{eq:meanBindTimeDifference}
  \abs{\avg{\Td}(r_0) - \avg{\Ts}(r_0)} &= \abs{ \frac{1}{D\hat{\lambda}}+\paren{\frac{\sinh(\sqrt{\hat{\lambda}}\rb)}{\rb}}
  \brac{\frac{R^3-\rb^3}{3D\paren{\rb\sqrt{\hat{\lambda}}\cosh(\sqrt{\hat{\lambda}}\rb) -\sinh(\sqrt{\hat{\lambda}}\rb)}}}}. 
\end{align}
This difference is $O(\lambda^{-\frac{1}{2}})$, consistent with the
results we prove in the next section on the uniform convergence of
$\pl$ to $\rho$ as $\lambda \to \infty$.
\eqref{eq:meanBindTimeDifference} also demonstrates that for large,
but fixed values of $\lambda$ the absolute difference in mean binding
times will increase like $\rb^{-1}$ as $\rb$ is decreased.  In
Figure~\ref{fig:meanBindTimeErr} the relative difference,
\begin{equation} \label{eq:meanBindTimeRelErr}
  \frac{\abs{\avg{\Ts} - \avg{\Td}}}{\avg{\Ts}},
\end{equation}
is graphed as $\lambda$ is varied for $r_0 = R$, illustrating the
$\lambda^{-1/2}$ convergence of $\avg{\Td}$ to $\avg{\Ts}$. For $\rb =
10^{-3} \mu \textrm{m}$ the mean binding times differ by less than $1
\%$ when $\lambda = 10^{11} \, \textrm{s}^{-1}$.

Each of Figures~\ref{fig:diffEqDensityErr},~\ref{fig:bindDistribErr},
and~\ref{fig:meanBindTimeErr} illustrate a $\lambda^{-1/2}$ empirical
convergence rate, consistent with the bound~\eqref{eq:unifConvRate} we
prove in the next section (Theorem~\ref{thm:densityConv}). Moreover,
they demonstrate that as $\rb$ is decreased larger values of $\lambda$
are required to ensure the absolute difference between the two models
remains below a fixed tolerance.  In all three cases, once $\lambda$
is sufficiently large that the $\lambda^{-1/2}$ convergence rate can
be observed, the absolute difference appears to scale like $\rb^{-1}$ as
$\rb$ is decreased.

\section{Rigorous Convergence Results} \label{S:convResults} In this
section we study the rigorous relationship between the Doi
model~\eqref{eq:radpureDoi} and Smoluchowski
model~\eqref{eq:radpureSmol}. In Subsection~\ref{sec:eigen_estimates}
we derive a rigorous error bound on the rate of convergence of the Doi
eigenvalues, $\mu_n(\lambda)$, to the Smoluchowski eigenvalues,
$\alpha_n$, as $\lambda \to \infty$.  In
Subsection~\ref{sec:eigenfunc_estim} we obtain similar estimates for
the convergence of the Doi eigenfunctions, $\psi^{out}_n$, to the
Smoluchowski eigenfunctions, $\phi_n$.  We obtain our main result in
subsection~\ref{sec:main_result} where we use these eigenvalue and
eigenfunction estimates to show the uniform convergence in space and
time of the solution to the Doi model~\eqref{eq:radpureDoi},
$\pl(r,t)$, to the solution of the Smoluchowski
model~\eqref{eq:radpureSmol}, $\rho(r,t)$, as $\lambda \to \infty$.
(With the error bound~\eqref{eq:unifConvRate}.)

\subsection{Eigenvalue Estimates}\label{sec:eigen_estimates}
In this subsection we derive estimates for the difference between the
Doi, $\mu_n(\lambda)$, and Smoluchowski, $\alpha_n$, eigenvalues. We
start by proving some properties of the functions $A(\mu,\lambda)$ and
$f(\mu)$ that we will find useful:
\begin{proposition}
  $A(\mu,\lambda)$ is monotone increasing. Furthermore for $\mu \leq
  \lambda$, $A$ is positive.
\end{proposition}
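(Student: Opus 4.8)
The plan is to handle the two pieces of the definition of $A(\cdot,\lambda)$ separately and then patch them together at $\mu=\lambda$, working throughout with $\lambda$ fixed and $\mu$ the variable. In each regime the point is to recognize $A$ as a simple rescaling of one of the classical functions $y\mapsto\tanh(y)/y$ or $y\mapsto\tan(y)/y$. So first I would record the two elementary monotonicity facts: $y\mapsto\tanh(y)/y$ is strictly decreasing on $(0,\infty)$, while $y\mapsto\tan(y)/y$ is strictly increasing on $(0,\pi/2)$. Both come from a one-line derivative computation: with $g(y)=y\,\sech^2 y-\tanh y$ one has $g(0)=0$ and $g'(y)=-2y\,\sech^2 y\,\tanh y<0$ for $y>0$, hence $g(y)<0$ and $\paren{\tanh(y)/y}'=g(y)/y^2<0$; the $\tan$ case is identical with all signs reversed. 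I would also note the limits $\tanh(y)/y\to 1$ and $\tan(y)/y\to 1$ as $y\to 0^+$, and the elementary bounds $\tanh(y)/y<1<\tan(y)/y$ on the respective domains.

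Next, for $\mu<\lambda$ put $s=\sqrt{\lambda-\mu}>0$, so that $A(\mu,\lambda)=\rb\cdot\dfrac{\tanh(\rb s)}{\rb s}$. Since $s$ is a strictly decreasing function of $\mu$ and $\tanh(\cdot)/(\cdot)$ is strictly decreasing, the composition $A(\cdot,\lambda)$ is strictly increasing on $\brac{0,\lambda)$, with $A<\rb$ throughout and $A\to\rb$ as $\mu\to\lambda^-$. Symmetrically, for $\mu>\lambda$ put $\sigma=\sqrt{\mu-\lambda}$, so $A(\mu,\lambda)=\rb\cdot\dfrac{\tan(\rb\sigma)}{\rb\sigma}$; now $\sigma$ increases with $\mu$ and $\tan(\cdot)/(\cdot)$ increases, so on the principal branch $0\le\mu-\lambda<(\pi/2\rb)^2$ the function $A(\cdot,\lambda)$ is again strictly increasing, with $A>\rb$ and $A\to\rb$ as $\mu\to\lambda^+$. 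Finally I would observe that $A$ extends continuously to $\mu=\lambda$ with common one-sided limit $\rb$, which glues the two strictly increasing pieces into a single strictly increasing function through $\mu=\lambda$. Positivity for $\mu\le\lambda$ is then immediate: for $\mu<\lambda$ both $\tanh(\rb s)>0$ and $s>0$, so $A>0$, and at $\mu=\lambda$ we have $A=\rb>0$.

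The only genuine subtlety is the bookkeeping at the matching point $\mu=\lambda$, where the closed-form expression in the statement is undefined, so the monotonicity assertion must be read with $A$ extended there by its (common) one-sided limit $\rb$; relatedly, the $\tan$ branch is monotone only between consecutive poles, so the global monotonicity statement is to be understood on the principal branch $0\le\mu-\lambda<(\pi/2\rb)^2$ (which is the only range entering the subsequent eigenvalue comparison). Everything else reduces to the routine derivative estimate for $g$ above, so I do not expect any real obstacle beyond presenting these two cases and the gluing cleanly.
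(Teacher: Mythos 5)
Your proof is correct and in substance the same as the paper's: the paper differentiates $A$ in $\mu$ directly and reduces the sign to $\sinh(u)\geq u$ and $\sin(u)\leq u$, which is exactly the monotonicity of $y\mapsto\tanh(y)/y$ and $y\mapsto\tan(y)/y$ that you invoke via the change of variables $s=\sqrt{\lambda-\mu}$, $\sigma=\sqrt{\mu-\lambda}$. Your additional care at the matching point $\mu=\lambda$ (continuous extension with common limit $\rb$) and the explicit restriction of the $\tan$ branch to $0\le\mu-\lambda<(\pi/2\rb)^2$ are refinements the paper leaves implicit, not a different argument.
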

\begin{proof}
  Positivity is trivial since $\tanh$ is positive.  Note also that
  $A(0,\lambda)
  =\tanh(\sqrt{\lambda}\rb) / \sqrt{\lambda}$. A simple
  computation shows that for $\mu<\lambda$
\begin{align*}
\DO{\mu}A(\mu;\lambda) = \frac{\tanh\paren{\sqrt{\lambda-\mu}\rb}-\rb\sqrt{\lambda-\mu}\paren{\sech^{2}\paren{\sqrt{\lambda-\mu}\rb}}}{2\paren{\lambda-\mu}^{3/2}}
=\frac{\sinh\paren{2\sqrt{\lambda-\mu}\rb}-2\rb\sqrt{\lambda-\mu}}{4\cosh^{2}\paren{\sqrt{\lambda-\mu}\rb}\paren{\lambda-\mu}^{3/2}}
\end{align*}
and for $\mu>\lambda$ 
\begin{align*}
  \DO{\mu}A(\mu;\lambda) = \frac{\rb\sqrt{\mu-\lambda}\paren{\sec^{2}\paren{\sqrt{\mu-\lambda}\rb}}-\tan\paren{\sqrt{\mu-\lambda}\rb}}{2\paren{\lambda-\mu}^{3/2}}
=\frac{2\rb\sqrt{\mu-\lambda}-\sin\paren{2\sqrt{\mu-\lambda}\rb}}{4\cos^{2}\paren{\sqrt{\mu-\lambda}\rb}\paren{\mu-\lambda}^{3/2}}.
\end{align*}

The result follows since for $u\geq 0$, $\sinh(u)\geq u$ and $\sin(u)\leq u$.
\end{proof}

Let the vertical asymptotes of $f(\mu)$ be denoted by $\beta_n$. They satisfy the equation
\begin{equation*}
R\beta_n \tan(\sqrt{\beta_n}(R-\rb))+\sqrt{\beta_n}=0;\quad \beta_n >0.
\end{equation*}

\begin{proposition} \label{prop:evalProps}
 We have the following
\begin{enumerate}
\item $0<\alpha_1 < \beta_1<\alpha_2<\ldots<\beta_n<\alpha_{n+1}<\ldots$
\item $f'(\mu)<0$ and $f(\mu)>0$ on $[0,\alpha_1)$  and $(\beta_n,\alpha_{n+1})$ for $n\geq1$.
\end{enumerate}
\end{proposition}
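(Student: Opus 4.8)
The plan is to work in the variable $s=\sqrt{\mu}$, set $\ell:=R-\rb$, and collapse $f$ to a single tangent. Dividing numerator and denominator of $f$ by $\sqrt{\mu}$ and applying the tangent subtraction formula gives
\begin{equation*}
  \sqrt{\mu}\,f(\mu)=\frac{R\sqrt{\mu}-\tan(\ell\sqrt{\mu})}{1+R\sqrt{\mu}\tan(\ell\sqrt{\mu})}=\tan\bigl(F(\mu)\bigr),\qquad F(\mu):=\arctan(R\sqrt{\mu})-\ell\sqrt{\mu}
\end{equation*}
(principal branch of $\arctan$); this holds for all $\mu>0$, and the poles of $f$ (the $\beta_n$) are exactly the $\mu>0$ with $\cos(F(\mu))=0$, while the zeros (the $\alpha_n$) are exactly the $\mu>0$ with $\tan(F(\mu))=0$. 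A one-line computation gives
\begin{equation*}
  F'(\mu)=\frac{\rb-\ell R^2\mu}{2\sqrt{\mu}\,(1+R^2\mu)},
\end{equation*}
so $F$ increases strictly on $(0,\mu_{\ast})$ and decreases strictly on $(\mu_{\ast},\infty)$, where $\mu_{\ast}:=\rb/(\ell R^2)$; moreover $F(0)=0$, $0<F(\mu_{\ast})<\pi/2$, and $F(\mu)\to-\infty$. Thus $F|_{(\mu_{\ast},\infty)}$ is a strictly decreasing bijection onto $(-\infty,F(\mu_{\ast}))$, and $F(\mu)\in(0,\pi/2)$ for $0<\mu\le\mu_{\ast}$.

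Part 1 is then immediate: on $(0,\mu_{\ast}]$ the function $f=\tan(F(\mu))/\sqrt{\mu}$ has no zero or pole (there $F\in(0,\pi/2)$), so $\alpha_1>\mu_{\ast}>0$, while on $(\mu_{\ast},\infty)$ the numbers $0,-\tfrac{\pi}{2},-\pi,-\tfrac{3\pi}{2},\dots$ all lie in $(-\infty,F(\mu_{\ast}))$ and are attained by $F$ exactly once each at strictly increasing $\mu$; hence $F(\alpha_n)=-(n-1)\pi$ and $F(\beta_n)=-\tfrac{2n-1}{2}\pi$, and the interlacing $0<\alpha_1<\beta_1<\alpha_2<\beta_2<\cdots$ follows from $0>-\tfrac{\pi}{2}>-\pi>\cdots$ together with the monotonicity of $F$ on $(\mu_{\ast},\infty)$. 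For the positivity half of Part 2: on $(\beta_n,\alpha_{n+1})$ one has $F(\mu)\in(-n\pi,-n\pi+\tfrac{\pi}{2})$, so $\tan(F(\mu))>0$ and $f(\mu)>0$; on $[0,\alpha_1)$ one has $F(\mu)\in(0,\pi/2)$ and $f(0)=\rb$, so again $f>0$.

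For $f'(\mu)<0$, the intervals $(\beta_n,\alpha_{n+1})$ and $(\mu_{\ast},\alpha_1)$ are the easy case: they lie in $(\mu_{\ast},\infty)$, where $F'<0$, so $\mu\mapsto\tan(F(\mu))$ is strictly decreasing (its derivative is $\sec^2(F)\,F'<0$) and, by the previous paragraph, strictly positive; since $\mu\mapsto\mu^{-1/2}$ is also strictly positive and strictly decreasing, the product $f$ is strictly decreasing. The remaining piece of $[0,\alpha_1)$, namely $[0,\mu_{\ast}]$, I would treat by differentiating $f=N/D$ directly, with $N(\mu)=R\sqrt{\mu}-\tan(\ell\sqrt{\mu})$ and $D(\mu)=R\mu\tan(\ell\sqrt{\mu})+\sqrt{\mu}$. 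Clearing denominators, $f'<0$ (valid where $0<\ell\sqrt{\mu}<\pi/2$) reduces to $Q(\sqrt{\mu})>0$, where $Q(s):=\ell R^2s^3-\rb s+Rs\cos(2\ell s)-\tfrac{1-R^2s^2}{2}\sin(2\ell s)$; substituting $x:=2\ell s\in(0,\pi)$ and using $R-\rb=\ell$ turns this into
\begin{equation*}
  \ell\Bigl(1-\tfrac{\sin x}{x}\Bigr)-R(1-\cos x)+\tfrac{R^2x}{4\ell}\,(x+\sin x)>0,
\end{equation*}
which, since $R\ge\ell$, follows from the elementary inequality $x(x+\sin x)\ge4(1-\cos x)$ on $[0,\pi]$: it gives $\tfrac{R^2x}{4\ell}(x+\sin x)\ge R(1-\cos x)$, leaving the strictly positive term $\ell(1-\tfrac{\sin x}{x})$. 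The endpoint $\mu=0$ is handled by the Taylor expansion $f(\mu)=\rb-\tfrac{R^3-\rb^3}{3}\mu+O(\mu^2)$.

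The main obstacle is this last reduction, i.e.\ $f'<0$ on $[0,\mu_{\ast}]$, which does not fall out of the $\tan(F)$ factorization and rests on the inequality $x(x+\sin x)\ge4(1-\cos x)$ on $[0,\pi]$ — nearly an equality near $x=0$ — which I would prove by differentiating four times: the fourth derivative of $x(x+\sin x)-4(1-\cos x)$ equals $x\sin x\ge0$ on $[0,\pi]$, and the expression together with its first three derivatives vanishes at $x=0$. A cruder bound on $Q$ is not enough (it fails on the first interval or two), so the argument for $f'<0$ genuinely splits into the $\tan(F)$ factorization for $(\beta_n,\alpha_{n+1})$ and the tail of $[0,\alpha_1)$, and this short calculus lemma for the head of $[0,\alpha_1)$.
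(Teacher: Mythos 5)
Your proof is correct, but it takes a genuinely different route from the paper's. The paper rescales to $x=R\sqrt{\mu}$, writes $f=N/D$ with $N(x)=R\paren{1-\tan(\kappa x)/x}$ and $D(x)=x\tan(\kappa x)+1$, reads off the interlacing of the zeros $d_n$ of $N$ and $\eta_n$ of $D$ from the intersections of the increasing functions $\tan(\kappa x)$, $x$, and $-1/x$ between consecutive asymptotes of $\tan(\kappa x)$, and then gets $f'<0$ on the whole positivity set in one stroke: $N'\le 0$ and $D'\ge 0$ hold \emph{globally} (each reduces to $\abs{\sin\theta}\le\abs{\theta}$), and $\tilde f>0$ exactly where $N>0$ and $D>0$, so $DN'-ND'<0$ there. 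Your phase-function identity $\sqrt{\mu}\,f(\mu)=\tan(F(\mu))$ with $F=\arctan(R\sqrt{\mu})-(R-\rb)\sqrt{\mu}$ makes Part 1 and the positivity claim essentially transparent --- the $\alpha_n$ and $\beta_n$ are the level sets $F=-(n-1)\pi$ and $F=-(n-\tfrac{1}{2})\pi$ of a function that is strictly decreasing past $\mu_{*}$ and stays in $(0,\pi/2)$ before $\mu_{*}$ --- and it gives $f'<0$ for free on $(\mu_{*},\infty)$ as a product of positive decreasing factors. The price is the initial segment $[0,\mu_{*}]$, where $F$ increases and you must fall back on a direct computation of $N'D-ND'$ and the fourth-derivative inequality $x(x+\sin x)\ge 4(1-\cos x)$; I checked that your reduction to $Q(s)>0$ and the final inequality are both correct, and that $x=2(R-\rb)\sqrt{\mu}\le 2\sqrt{(R-\rb)\rb}/R\le 1<\pi$ on $[0,\mu_{*}]$, so multiplying through by $\cos^{2}(\ell s)$ is legitimate there. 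In short, the paper's global monotonicity of $N$ and $D$ is what lets it avoid your case split, while your factorization is what lets you avoid the graphical interlacing argument. One small point in your favor: your Taylor expansion $f(\mu)=\rb-\tfrac{1}{3}(R^{3}-\rb^{3})\mu+O(\mu^{2})$ makes the endpoint claim $f'(0)<0$ explicit, which the paper's formula $f'=\tilde f'(x)\,\D{x}{\mu}$ leaves slightly delicate at $\mu=0$, where $N'(0)=D'(0)=0$ while $\D{x}{\mu}$ blows up.
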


\begin{proof}

1. Let $\kappa = 1-\rb/R$ and $x=R\sqrt{\mu}$. We make the change of variable in $f(\mu)$ and obtain 
\begin{equation*}
f(\mu)\equiv\tilde{f}(x) =\frac{R\paren{x-\tan( \kappa x)}}{x^2\tan(\kappa x)+x}=\frac{R\paren{1-\frac{\tan(\kappa x)}{x}}}{x\tan(\kappa x)+1}\equiv\frac{N(x)}{D(x)}.
\end{equation*}
Let $d_n$ be such that $N(d_n) =0$ and $\eta_n$ be such that
$D(\eta_n)=0$. In terms of the old variable, we have $d_n
=R\sqrt{\alpha_n}$ and $\eta_n=R\sqrt{\beta_n}$. $N(x) =0$ imply that
$\tan(\kappa x) = x$ and $D(x)=0$ imply that $\tan(\kappa x)
=-\frac{1}{x}$. Note that the functions $\tan(\kappa x)$, $x$ and
$-\frac{1}{x}$ are all monotone increasing. Finally if we let
$\theta_n =\frac{\pi}{2\kappa}(2n-1)$ be the vertical asymptotes of
$\tan(\kappa x)$ one easily checks that we have
\begin{equation*}
0<d_1<\theta_1<\eta_1<\ldots<d_n<\theta_n<\eta_n\ldots
\end{equation*}
This proves 1.

2. $N>0$ on $\displaystyle\bigcup_{n\geq
    1}(\theta_n,d_{n+1})\cup(0,d_1)$, and $N<0$ on
  $\displaystyle\bigcup_{n\geq 1}(d_n,\theta_n)$. Similarly,
  $D>0$ on $\displaystyle\bigcup_{n\geq
    1}(\eta_n,\theta_{n+1})\cup(0,\theta_1)$ and $D<0$ on
  $\displaystyle\bigcup_{n\geq 1}(\theta_n,\eta_n)$. Thus it follows
  that $\tilde{f}(x) >0$ if and only if $x\in
  \displaystyle\bigcup_{n\geq 1}(\eta_n,d_{n+1})\cup (0,d_1)$.  Next,
  we show that $f'<0$. Note
\begin{align*}
N'(x) = \frac{R\paren{\tan(\kappa x) - \kappa x\sec^2(\kappa x)}}{x^2}
= \frac{R\paren{\sin(2\kappa x)-2\kappa x}}{2x^2\cos^2(\kappa x)},
\end{align*}
and 
\begin{align*}
  D'(x) = \tan(\kappa x) + \kappa x\sec^2(\kappa x)
  =\frac{\sin(2\kappa x)+2\kappa x}{2\cos^2(\kappa x)}.
\end{align*}
Since $\abs{\sin(\theta)} \leq\abs{\theta}$ it follows that
$N'(x)\leq0$ and that $D'(x)\geq 0$, with equality in both only when
$x=0$.  From what we have shown above, $\tilde{f}(x)> 0$ if and only
if both $N(x)>0$ and $D(x)>0$ so that
\begin{align*}
f'(\mu) = \tilde{f}'(x)\cdot \D{x}{\mu}
= \brac{\frac{D(x)N'(x)-N(x)D'(x)}{{D(x)}^2}}\cdot \D{x}{\mu}
 < 0.
\end{align*}  
\end{proof}

\begin{proposition}
  The Doi eigenvalues $\mu_n(\lambda)$ satisfy for $n$ such that
  $\mu_n(\lambda)\leq\lambda$
\begin{equation*}
0<\mu_1(\lambda)<\alpha_1 < \beta_1<\mu_2(\lambda)<\alpha_2<\ldots<\beta_{n-1}<\mu_{n}(\lambda)<\alpha_{n}
\end{equation*}
\end{proposition}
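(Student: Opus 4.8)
The plan is to locate the Doi eigenvalues — which, by the characterization preceding this proposition, are exactly the solutions $\mu\in(0,\lambda]$ of $f(\mu)=A(\mu,\lambda)$ — relative to the graph of $f$, exploiting the monotonicity properties established in the two preceding propositions. Set $g(\mu)=f(\mu)-A(\mu,\lambda)$. On $[0,\lambda]$ the function $A(\cdot,\lambda)$ is continuous, strictly increasing, and satisfies $0<A(\cdot,\lambda)\le\rb$, since $A(0,\lambda)=\tanh(\sqrt{\lambda}\,\rb)/\sqrt{\lambda}$ and $A(\lambda,\lambda)=\rb$. On each ``positive branch'' of $f$, namely $(0,\alpha_1)$ and each $(\beta_{m-1},\alpha_m)$ with $m\ge 2$, Proposition~\ref{prop:evalProps} gives $f>0$ and $f'<0$; moreover $f(\alpha_m)=0$, $f\to+\infty$ as $\mu\to\beta_{m-1}^{+}$, and $\lim_{\mu\to 0^{+}}f(\mu)=\rb$ (from the expression for $\tilde f$ in the proof of Proposition~\ref{prop:evalProps}). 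Hence on each positive branch $g$ is continuous and strictly decreasing, with $g\to+\infty$ at the left endpoint, or $g(0)=\rb-\tanh(\sqrt{\lambda}\,\rb)/\sqrt{\lambda}>0$ for the first branch (using $\tanh u<u$ for $u>0$). On the complementary ``negative branches'' $(\alpha_m,\beta_m)$ we have $f<0$ while $A>0$ on $[0,\lambda]$, so $g<0$ there; thus no Doi eigenvalue $\le\lambda$ lies in a negative branch or at an asymptote $\beta_m$.

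The second step is to show that each positive branch $(\beta_{m-1},\alpha_m)$ with $\alpha_m\le\lambda$ contains exactly one Doi eigenvalue. Strict monotonicity of $g$ gives at most one zero; for existence, $g$ tends to $+\infty$ at the left endpoint (or is positive at $0$), while $g(\alpha_m)=-A(\alpha_m,\lambda)<0$, so the intermediate value theorem produces a zero $\mu\in(\beta_{m-1},\alpha_m)$. The only remaining positive branch is the one, if any, containing $\lambda$ in its interior, say $(\beta_{m-1},\alpha_m)\ni\lambda$: there $g$ is still strictly decreasing on $[\beta_{m-1},\lambda]$ and positive near $\beta_{m-1}$, so either $g\ge 0$ throughout $[\beta_{m-1},\lambda]$ — in which case this branch, and every later branch, contains no Doi eigenvalue $\le\lambda$ — or $g(\lambda)<0$ and there is a unique zero in $(\beta_{m-1},\lambda)\subset(\beta_{m-1},\alpha_m)$.

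Finally I would assemble these facts. The Doi eigenvalues $\le\lambda$, listed in increasing order as $\mu_1(\lambda)<\cdots<\mu_N(\lambda)$, are precisely the zeros of $g$ in $[0,\lambda]$, each in a distinct positive branch, and the positive branches actually containing such a zero form an initial segment $1,\dots,N$: if the $(m+1)$st positive branch $(\beta_m,\alpha_{m+1})$ contains a zero $\le\lambda$, then $\beta_m<\lambda$, hence $\alpha_m<\beta_m<\lambda$, and by the second step the $m$th positive branch $(\beta_{m-1},\alpha_m)$ also contains a zero. Therefore $\mu_k(\lambda)\in(\beta_{k-1},\alpha_k)$ for $k=1,\dots,N$ (with $\beta_0:=0$); interleaving this with the chain $\alpha_1<\beta_1<\alpha_2<\cdots$ from Proposition~\ref{prop:evalProps} yields the asserted string of inequalities.

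The step requiring the most care is the bookkeeping around the cutoff $\mu=\lambda$: the positive branch containing $\lambda$ need not close up at some $\alpha_m\le\lambda$, so one cannot merely evaluate $g$ at $\alpha_m$ there. The dichotomy of the second step, together with the strict monotonicity of $g$ on positive branches and the exclusion of zeros from negative branches, is exactly what ensures that no spurious Doi eigenvalue slips in and that the increasing labeling $\mu_1<\mu_2<\cdots$ matches the branch ordering.
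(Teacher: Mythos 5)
Your proposal is correct and follows essentially the same route as the paper: the paper's proof is a one-line appeal to the facts that $f$ is decreasing wherever it is positive and that $A(\cdot,\lambda)$ is positive and increasing for $\mu\le\lambda$, and your argument is a careful expansion of exactly that reasoning (sign of $g=f-A$ on positive versus negative branches, intermediate value theorem on each branch, and the bookkeeping at the cutoff $\mu=\lambda$). No gaps.
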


\begin{proof}
  This follows from the fact that $f(\mu)$ is decreasing wherever it
  is positive and that $A(\mu;\lambda)$ is positive and increasing for
  $\mu \leq \lambda$.
\end{proof}

We will also have need for the following\\
 \begin{proposition}\label{prop:eigenBehaviour}
   Let $\{\gamma_n\}$ denote the eigenvalues for the (positive)
   radially symmetric Laplacian on $\left[0, R\right)$, with zero Neumann boundary conditions on
   the ball of radius $R$. Let $\alpha_n$ and $\mu_n$ be as above.
   Then the following hold:
\begin{enumerate}
\item The $\mu_n(\lambda)$ are continuous and monotone increasing in $\lambda$ for all $n\geq1$
\item For all $n\geq 1$ and any fixed $\lambda$, we have that 
\begin{equation*}
\mu_n(0) = \gamma_n =\paren{\frac{(n-1)\pi}{R}}^2\leq \mu_n(\lambda)
\end{equation*}

\end{enumerate}
\end{proposition}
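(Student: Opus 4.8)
The plan is to obtain both statements from the variational (Courant–Fischer) characterization of the eigenvalues of the relevant self-adjoint operators. Let $H_\lambda = -\Delta_r + \lambda \ind_{[0,\rb]}$ act on $L^2([0,R), r^2\,dr)$ with a Neumann condition at $R$ (and the natural interior matching conditions), so that $\mu_n(\lambda)$ is its $n$th eigenvalue. The associated quadratic form is
\begin{equation*}
  Q_\lambda[u] = \int_0^R \abs{u'(r)}^2 r^2\,dr + \lambda \int_0^{\rb} \abs{u(r)}^2 r^2\,dr,
\end{equation*}
with form domain independent of $\lambda$ (the weighted $H^1$ space with the Neumann/regularity conditions built in). The min–max principle then gives
\begin{equation*}
  \mu_n(\lambda) = \min_{\substack{V \subset \mathcal{D} \\ \dim V = n}} \ \max_{0 \neq u \in V} \frac{Q_\lambda[u]}{(u,u)}.
\end{equation*}

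For the monotonicity in part (1): for fixed $u$, the map $\lambda \mapsto Q_\lambda[u]$ is nondecreasing (indeed affine with nonnegative slope $\int_0^{\rb}\abs{u}^2 r^2\,dr$), hence each min–max expression is nondecreasing in $\lambda$, so $\mu_n(\lambda)$ is monotone increasing in $\lambda$ for every $n$. Continuity follows because $\lambda \mapsto Q_\lambda[u]/(u,u)$ is Lipschitz in $\lambda$ uniformly over the unit sphere of the form domain (the slope is bounded by $1$), so the min–max values are themselves Lipschitz (with constant $1$) in $\lambda$; alternatively one can read continuity off the transcendental equation $f(\mu_n) = A(\mu_n,\lambda)$ using the implicit function theorem together with the strict monotonicity of $f$ (Proposition~\ref{prop:evalProps}) and of $A$ (the first Proposition), but the form argument is cleaner and covers all $n$ at once.

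For part (2): setting $\lambda = 0$ reduces $H_0$ to the pure radially symmetric Neumann Laplacian on the ball of radius $R$, whose $n$th eigenvalue is exactly $\gamma_n$; the explicit value $\gamma_n = ((n-1)\pi/R)^2$ comes from solving $-\Delta_r \phi = \gamma\phi$ with $\phi'(R)=0$ and regularity at $0$, i.e. $\phi(r) = \sin(\sqrt{\gamma}\,r)/r$ with $\sqrt{\gamma} = (n-1)\pi/R$ (the $n=1$ case being the constant eigenfunction). Then $\mu_n(0) = \gamma_n$, and since $\mu_n$ is monotone increasing in $\lambda$ by part (1), $\gamma_n = \mu_n(0) \leq \mu_n(\lambda)$ for every $\lambda \geq 0$ and every $n \geq 1$.

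I do not expect a serious obstacle here; the only point requiring a little care is the bookkeeping that makes the form domain genuinely $\lambda$-independent and that identifies $H_0$ with the Neumann Laplacian on the ball — in particular checking that no spurious boundary contribution appears at $r=0$ from the weight $r^2$, and that the interior matching conditions at $r=\rb$ used to define $\psi_n$ are exactly those encoded by the form (continuity of $u$ and of $r^2 u'$ across $\rb$). Once the operator is correctly identified as the Friedrichs extension of $Q_\lambda$, both claims are immediate consequences of min–max monotonicity.
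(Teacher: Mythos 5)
Your overall route is exactly the one the paper intends: the paper gives no proof of this proposition, only a remark that it ``is a straightforward application of the variational minimax principle of Poincar\'e,'' and your min--max argument supplies precisely that. Part (1) is fine: the form domain is $\lambda$-independent, $\lambda\mapsto Q_\lambda[u]$ is affine with slope $\int_0^{\rb}\abs{u}^2r^2\,dr\in[0,(u,u)]$, so the min--max values are nondecreasing and Lipschitz with constant $1$ in $\lambda$. (Strictly this gives \emph{nondecreasing}; strict increase would require ruling out an eigenfunction vanishing identically on $[0,\rb]$, but nondecreasing is all that is used later, e.g.\ in Corollary~\ref{cor:eigenMonotone}.) The identification of $H_0$ with the radial Neumann Laplacian and the consequence $\mu_n(0)\leq\mu_n(\lambda)$ are also correct.

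The one genuine error is the explicit formula $\gamma_n=\paren{(n-1)\pi/R}^2$ and your verification of it. The function $\phi(r)=\sin(\sqrt{\gamma}\,r)/r$ with $\sqrt{\gamma}=(n-1)\pi/R$ satisfies $\phi(R)=0$, i.e.\ a \emph{Dirichlet} condition at $R$; its derivative there is $(-1)^{n-1}(n-1)\pi/R^{2}\neq0$ for $n\geq2$, so it is not a Neumann eigenfunction. The radially symmetric Neumann eigenvalues of $-\Delta_r$ on the ball are determined by $\tan(\sqrt{\gamma}\,R)=\sqrt{\gamma}\,R$ (equivalently, set $\lambda=0$ in $f(\mu)=A(\mu,\lambda)$ and simplify using the tangent addition formula), and the $n$th nonnegative root of $\tan x = x$ lies in $[(n-1)\pi,(n-\tfrac12)\pi)$. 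Hence the correct chain is $\paren{(n-1)\pi/R}^2\leq\gamma_n=\mu_n(0)\leq\mu_n(\lambda)$, with equality in the first inequality only for $n=1$. To be fair, this error is inherited from the paper's own statement of the proposition; what is actually used downstream (in the proof of Theorem~\ref{thm:densityConv}, to dominate $\sum_n e^{-\gamma_n t}$ by $\sum_n e^{-(n-1)^2\pi^2 t/R^2}$) is only the lower bound $\gamma_n\geq\paren{(n-1)\pi/R}^2$, which your argument, corrected as above, does deliver. So both the proposition and your proof survive once the middle equality is weakened to an inequality.
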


\begin{remark}
  The proof is a straightforward application of the variational
  minimax principle of Poincare.
  We do not show  it here.\\
\end{remark}

\begin{proposition}\label{prop:Cardinality_bound}
  For any $L \in \R_{+}$, define the index set $\mathcal{A}(L) = \{n|
  \alpha_n\leq L\}$. If we let $\abs{\mathcal{A}(L)} \equiv
  \card(\mathcal{A}(L))$ then given $\delta>0$ there exists constants
  $C_1^{*}(\delta)$ and $C_2^{*}(\delta)$ such that for $L\geq \delta$
\begin{equation}
C_1^{*}(\delta)\sqrt{L} \leq\abs{ \mathcal{A}(L)} \leq C_2^{*}(\delta)\sqrt{L}
\end{equation}
\end{proposition}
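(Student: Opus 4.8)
The plan is to reduce the statement to a Weyl-type eigenvalue count for the Smoluchowski Laplacian $-\Delta_r$ on the spherical shell $\brac{\rb, R}$ with a Dirichlet condition at $\rb$ and a Neumann condition at $R$. The key observation is that, by Proposition~\ref{prop:evalProps}, the eigenvalues $\alpha_n$ interlace with the vertical asymptotes $\beta_n$ of $f$, and both the $\alpha_n$ and the $\beta_n$ are (via the change of variables $x = R\sqrt{\mu}$, $\kappa = 1 - \rb/R$) determined by the intersections of the strictly monotone functions $\tan(\kappa x)$, $x$, and $-1/x$. Consequently $d_n = R\sqrt{\alpha_n}$ lies between consecutive asymptotes $\theta_{n-1} = \tfrac{\pi}{2\kappa}(2n-3)$ and $\theta_n = \tfrac{\pi}{2\kappa}(2n-1)$ of $\tan(\kappa x)$; this pins down $\sqrt{\alpha_n}$ to within an interval of length $\tfrac{\pi}{\kappa R}$, so that
\begin{equation*}
  \frac{(n-2)\pi}{\kappa R} \leq \sqrt{\alpha_n} \leq \frac{(n-1)\pi}{\kappa R}
\end{equation*}
for all $n \geq 2$ (with a trivial adjustment for $n = 1$). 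In other words, $\sqrt{\alpha_n} = \Theta(n)$ with explicit constants depending only on $R$ and $\rb$.

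First I would make the above interlacing bound precise: since $d_n \in (\theta_{n-1}, \eta_{n-1}) \subset (\theta_{n-1}, \theta_n)$ by part~1 of Proposition~\ref{prop:evalProps}, we immediately get two-sided linear bounds on $\sqrt{\alpha_n}$ in $n$. Inverting, $n \leq \abs{\mathcal{A}(L)}$ iff $\alpha_n \leq L$ iff $\sqrt{\alpha_n} \leq \sqrt{L}$, and combining with $\sqrt{\alpha_n} \asymp n$ yields $c_1 \sqrt{L} - c_1' \leq \abs{\mathcal{A}(L)} \leq c_2 \sqrt{L} + c_2'$ for absolute constants (depending on $R, \rb$). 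Finally, to absorb the additive constants into multiplicative ones on the range $L \geq \delta$, note that for $L \geq \delta$ we have $1 \leq \delta^{-1/2}\sqrt{L}$, so any additive error $c'$ can be replaced by $c' \delta^{-1/2} \sqrt{L}$; choosing $C_1^*(\delta)$ and $C_2^*(\delta)$ accordingly (and using $\abs{\mathcal{A}(L)} \geq 1$ for the lower bound, since $\alpha_1 \leq \delta \leq L$ once $\delta$ is large enough — or more carefully, handling small $L \geq \delta$ separately) gives the claimed $C_1^*(\delta)\sqrt{L} \leq \abs{\mathcal{A}(L)} \leq C_2^*(\delta)\sqrt{L}$.

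I expect the only real subtlety to be bookkeeping at the low end of the spectrum: the lower bound $\abs{\mathcal{A}(L)}\geq C_1^*(\delta)\sqrt{L}$ can fail near $L = \delta$ if $\delta$ is too small, since $\abs{\mathcal{A}(L)}$ is integer-valued and jumps, whereas $\sqrt{L}$ is continuous. This is precisely why the statement restricts to $L \geq \delta$ and allows $C_1^*$ to depend on $\delta$: one picks $C_1^*(\delta)$ small enough that $C_1^*(\delta)\sqrt{L} \leq 1 \leq \abs{\mathcal{A}(L)}$ for all $L$ in the bounded range $\brac{\delta, \delta_0}$ where the asymptotic linear bound on $\sqrt{\alpha_n}$ has not yet "kicked in," and uses the genuine Weyl-type estimate for $L$ large. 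The upper bound has no such issue since it only needs $\abs{\mathcal{A}(L)}$ to not be too large, which the interlacing gives directly. An alternative, cleaner route that avoids casework is to invoke the variational min-max principle (as already used for Proposition~\ref{prop:eigenBehaviour}): bracket $-\Delta_r$ on $\brac{\rb,R}$ between the pure-Dirichlet and pure-Neumann Laplacians on the same shell, whose eigenvalue counts are classical $\asymp \sqrt{L}$, giving both bounds simultaneously with constants depending only on $R - \rb$; I would mention this as the conceptual reason the result holds, and use whichever presentation is shortest.
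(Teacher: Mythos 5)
Your argument is correct and is essentially the paper's: the paper likewise counts $\abs{\mathcal{A}(L)}$ by comparing the roots of $\tan(\kappa x)=x$ in $[0,R\sqrt{L}]$ to the asymptotes $\theta_n$ of $\tan(\kappa x)$, obtaining $\tfrac{\kappa R\sqrt{L}}{\pi}-1\leq\abs{\mathcal{A}(L)}\leq\tfrac{\kappa R\sqrt{L}}{\pi}+1$ and then absorbing the $\pm 1$ using $L\geq\delta$, exactly as you do. One small slip: the interlacing of Proposition~\ref{prop:evalProps} places $d_n\in(\eta_{n-1},\theta_n)$, not $(\theta_{n-1},\eta_{n-1})$, though your weaker conclusion $d_n\in(\theta_{n-1},\theta_n)$ (hence $\sqrt{\alpha_n}=\Theta(n)$) still follows and the rest goes through.
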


\begin{proof}
  Write $\til{L} =R\sqrt{L}$ and again define $\kappa = 1 - \rb / R$.
  Then $\abs{ \mathcal{A}(L)}$ is just the number of solutions to
  $\tan(\kappa x)= x$ which lie in the interval $[0,\til{L}]$. This
  number is well approximated by the number of vertical asymptotes of
  $\tan(\kappa x)$. It then follows that
\begin{equation*}
\frac{\kappa \til{L}}{\pi}-1 \leq\abs{ \mathcal{A}(L)} \leq \frac{\kappa \til{L}}{\pi}+1
\end{equation*}
so that 
\begin{equation*}
\sqrt{L}\paren{\frac{R-\rb}{\pi} -\frac{1}{\sqrt{L}}} \leq\abs{ \mathcal{A}(L)} \leq\sqrt{L}\paren{\frac{R-\rb}{\pi} +\frac{1}{\sqrt{L}}}. 
\end{equation*}
If $L\geq\delta$ the choice $\displaystyle C_1^{*}(\delta) = \frac{R-\rb}{\pi} -\frac{1}{\sqrt{\delta}}$ and $\displaystyle C_2^{*}(\delta) = \frac{R-\rb}{\pi} +\frac{1}{\sqrt{\delta}}$ gives the proposition.\\
\end{proof}

\begin{remark}\label{rem:card_choice}
  In practice we will choose $\delta =4\pi^2/(R-\rb)^2$ so that
  $C_1^{*}=(R-\rb)/2\pi$.
\end{remark}

We now give our main convergence estimate for the eigenvalues of the
Doi model. The following theorem can be regarded as the heart of the
subsequent computations.
\begin{theorem}\label{thm:EigenConv}
  Let $0<\sigma_0 < \tfrac{1}{4}$ and define $M(\lambda) \equiv
  K_0\lambda^{\sigma}$ for $K_0 > 1$. For any fixed $\sigma \in
  (0,\sigma_0]$ there exists $\lambda_0 >0$ such that for
  $\lambda\geq\lambda_0$, $\frac{M(\lambda)}{\lambda}\leq
  \frac{1}{2}$. Then for $\alpha_n \leq M(\lambda)$,
  $\mu_n(\lambda) \to \alpha_n$,
  $\mathcal{O}(\lambda^{-(\frac{1}{2}-2\sigma)})$. \\
\end{theorem}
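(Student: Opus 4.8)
The plan is to read the estimate straight off the transcendental equations $f(\mu_n)=A(\mu_n,\lambda)$ and $f(\alpha_n)=0$, using the interlacing already proved together with a lower bound on $\lvert f'\rvert$. First I would dispose of the right-hand side. Since $\mu_n(\lambda)<\alpha_n\le M(\lambda)\le\lambda/2$, we are in the regime $\mu<\lambda$, so $A(\mu_n,\lambda)=\tanh(\sqrt{\lambda-\mu_n}\,\rb)/\sqrt{\lambda-\mu_n}>0$, and $\lambda-\mu_n\ge\lambda/2$ gives the crude but decisive bound $0<A(\mu_n,\lambda)\le\sqrt{2/\lambda}$. Hence $f(\mu_n)-f(\alpha_n)=A(\mu_n,\lambda)=O(\lambda^{-1/2})$, and by Proposition~\ref{prop:evalProps} together with the interlacing proposition both $\mu_n$ and $\alpha_n$ lie in the single branch $(\beta_{n-1},\alpha_n)$ (with $\beta_0:=0$) on which $f$ is positive, strictly decreasing, and vanishes only at $\alpha_n$.

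Next I would produce a lower bound on $\lvert f'\rvert$ valid on that whole branch. Working in the variable $x=R\sqrt\mu$ with $\kappa=1-\rb/R$ and the functions $N,D$ from the proof of Proposition~\ref{prop:evalProps}, one has $f=N/D$ and $f'=\tilde f'(x)\,\tfrac{dx}{d\mu}$ with $\tilde f'=(N'D-ND')/D^2$; since $f'<0$ strictly, the numerator $N'D-ND'$ never vanishes on the branch. Using $\lvert N'\rvert\lesssim x$, $D\asymp 1+x^2$, and $\lvert D'\rvert\lesssim x^3$ on the (positive part of the) branch, together with a careful accounting of the possible near-cancellation of the two terms $N'D$ and $ND'$, I would show $\lvert N'D-ND'\rvert\gtrsim x$, hence $\lvert\tilde f'(x)\rvert\gtrsim x^{-3}$, and therefore — after the chain rule and $\mu\le\alpha_n$ — that $\lvert f'(\mu)\rvert\gtrsim \alpha_n^{-2}$ for every $\mu\in(\beta_{n-1},\alpha_n)$. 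The finitely many indices $n$ for which $\alpha_n$ lies below a fixed threshold (where these asymptotic bounds are not yet in force) are handled trivially, since for them $\alpha_n=O(1)$ and $\lvert f'\rvert$ is bounded below by a positive constant.

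Combining the two steps by the mean value theorem, there is $\xi_n\in(\mu_n,\alpha_n)$ with
\begin{equation*}
  \lvert\mu_n(\lambda)-\alpha_n\rvert=\frac{A(\mu_n,\lambda)}{\lvert f'(\xi_n)\rvert}\le\frac{\sqrt{2/\lambda}}{c\,\alpha_n^{-2}}=\frac{\sqrt{2}}{c}\,\alpha_n^{2}\,\lambda^{-1/2}\le\frac{\sqrt{2}\,K_0^{2}}{c}\,\lambda^{-\left(\frac12-2\sigma\right)},
\end{equation*}
where in the last step I used $\alpha_n\le M(\lambda)=K_0\lambda^{\sigma}$. This holds uniformly in $n$ with $\alpha_n\le M(\lambda)$, which is the assertion; the hypothesis $\sigma_0<\tfrac14$ is precisely what makes the exponent $\tfrac12-2\sigma$ positive, so that the right-hand side is genuinely $o(1)$, and $\lambda_0$ is chosen so that both $M(\lambda)/\lambda\le\tfrac12$ (needed in the first step) and the asymptotic derivative bounds of the second step hold for $\lambda\ge\lambda_0$.

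The step I expect to be the main obstacle is the uniform lower bound $\lvert f'(\mu)\rvert\gtrsim\alpha_n^{-2}$ on the whole branch $(\beta_{n-1},\alpha_n)$: the delicate point is keeping the numerator $N'D-ND'$ away from zero at the interior points where $N'D$ and $ND'$ nearly cancel. Near $\alpha_n$ itself one actually gets the sharper bound $\lvert f'\rvert\asymp\alpha_n^{-1}$, but a worst-case interior point degrades this to $\alpha_n^{-2}$, which is exactly the source of the loss from the optimistic $\lambda^{-(\frac12-\sigma)}$ to the stated $\lambda^{-(\frac12-2\sigma)}$. Once that derivative bound is in hand, the remainder is bookkeeping.
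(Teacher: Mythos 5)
Your argument is correct and is essentially the paper's argument: bound the right-hand side of the transcendental equation by $A(\mu_n,\lambda)\le\sqrt{2/\lambda}$ using $\lambda-\mu_n\ge\lambda/2$, then convert this into an eigenvalue estimate via the mean value theorem together with a derivative lower bound of size $\alpha_n^{-2}$, arriving at $\alpha_n-\mu_n\le CK_0^2\lambda^{2\sigma-1/2}$. The one structural difference is that the paper clears the denominator first: in the variable $x=R\sqrt{\mu}$ it writes the eigenvalue equation as $N(x_n)=D(x_n)\tilde{A}(x_n,\lambda)$ and applies the mean value theorem to $N$ alone, so it needs only the explicit bound $|N'(x)|\gtrsim 1/x$ (valid once $x$ is bounded away from $0$, which monotonicity of $\mu_1(\lambda)$ in $\lambda$ supplies) together with the upper bound $D(x)\le 1+x^2$ on the positive branch; the factor $1+x^2$ then enters through $|D\tilde{A}|\le(1+x^2)\sqrt{2}/\sqrt{\lambda}$ rather than through $1/|f'|$. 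What you flag as the main obstacle --- possible near-cancellation in $N'D-ND'$ --- is in fact a non-issue: on the branch where $\tilde{f}>0$ one has $N>0$, $D>0$, $N'\le 0$, $D'\ge 0$ (exactly the sign pattern Proposition~\ref{prop:evalProps} uses to prove $f'<0$), so the two terms reinforce and $|\tilde{f}'|\ge |N'|/D$ follows at once, giving your $|f'(\mu)|\gtrsim\alpha_n^{-2}$ with no case analysis. One small caveat: $D\asymp 1+x^2$ fails as a two-sided bound near the left endpoint of the branch, where $D$ vanishes; but there small $D$ only makes $|\tilde{f}'|=(|N'|D+N|D'|)/D^2$ larger, so only the upper bound on $D$ is actually needed and your conclusion stands.
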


\begin{remark}
  Note that in the remainder $C$ will denote an arbitrary constant
  that may depend on $R$, $\rb$, and $\lambda_0$. We will also
  subsequently assume $\lambda_0 > 1$.
\end{remark}

\begin{proof}
  Recall that the Doi eigenvalues $\mu_n(\lambda)$ satisfy
\begin{equation*}
f(\mu_{n}(\lambda)) = A(\mu_{n}(\lambda),\lambda).
\end{equation*}
As before let $\kappa = 1- \rb / R$ and let $x=R\sqrt{\mu}$. Recalling
the definitions of $D(x)$ and $N(x)$ from
Proposition~\ref{prop:evalProps}, define
\begin{equation*}
B(x;\lambda) = D(x)\tilde{A}(x,\lambda)\equiv  \frac{\brac{x\tan(\kappa x)+1}\tanh\paren{\rb\sqrt{1-\frac{x^2}{\lambda R^2}}}}{\sqrt{\lambda}\sqrt{1-\frac{x^2}{\lambda R^2}}}.
\end{equation*}
It follows that the rescaled Doi eigenvalues $x_n(\lambda)$ satisfy
\begin{equation*}
N(x_n(\lambda)) = B(x_n(\lambda); \lambda).
\end{equation*}

For $K_0 > 1$ we choose $\lambda\geq(2K_0)^\frac{1}{1-\sigma_0}$ so
that $\frac{M(\lambda)}{\lambda} \leq \frac{1}{2}$.  Recall
$f(\mu)\equiv\tilde{f}(x)$.  We restrict to $\{x : \frac{x^2}{R^2}\leq
M(\lambda), \tilde{f}(x) \geq 0\}$, and let $h(u) =(1-u)^{-1/2}$.
Since $h$ is monotone it follows that
\begin{equation*}
\frac{1}{\sqrt{\lambda}\sqrt{1-\frac{x^2}{\lambda R^2}}} \equiv \frac{h(\tfrac{x^2}{R^2\lambda})}{\sqrt{\lambda}}\leq \frac{h(\tfrac{1}{2})}{\sqrt{\lambda}} =\frac{\sqrt{2}}{\sqrt{\lambda}}.
\end{equation*}
As shown in Proposition~\ref{prop:evalProps}, $\tilde{f}(x) \geq 0$
implies $-\frac{1}{x} \leq \tan(\kappa x)\leq x$ so that
\begin{align*}
\abs{B(x;\lambda)} \leq \frac{\brac{1 + x\tan(\kappa x)}\sqrt{2}}{\sqrt{\lambda}}
\leq  \frac{\brac{1 + x^2}\sqrt{2}}{\sqrt{\lambda}}.
\end{align*}

Write $x_n(\lambda) = d_n -\epsilon_n$. (Recall $d_n =
R\sqrt{\alpha_n}$ and $\eta_n = R\sqrt{\beta_n}$.) Then
\begin{equation*}
N(d_n - \epsilon_n) = B(x_n(\lambda);\lambda).
\end{equation*}
Applying the mean value theorem we get, for some $e_n \in (x_n(\lambda), d_n)$, that
\begin{equation*}
N(d_n) -N'(e_n)\epsilon_n = B(x_n(\lambda);\lambda).
\end{equation*}
As $N(d_n) = 0$ and $N'(e_n) < 0$, 
\begin{equation*}
\abs{N'(e_n)}\epsilon_n \leq \frac{\brac{1 + x_n^2}\sqrt{2}}{\sqrt{\lambda}}
\end{equation*}
which gives that 
\begin{align} \label{eq:epsnEstimate}
\epsilon_n
&\leq \frac{\sqrt{2}\brac{1+x^2_n}\paren{2e_n^2\cos^2(\kappa e_n)}}{R\sqrt{\lambda}\paren{2\kappa e_n -\sin(2\kappa e_n)}}
 \leq \frac{\sqrt{2}\brac{1+d^2_n}e_n}{\kappa R\sqrt{\lambda}\paren{1 -\tfrac{\sin(2\kappa e_n)}{2\kappa e_n }}}
\leq \frac{C\brac{1+d^2_n}d_n}{\sqrt{\lambda}\paren{1 -\tfrac{\sin(2\kappa e_n)}{2\kappa e_n }}}
\end{align}

 For any fixed $c \in \paren{0, \kappa \, x_1(1)}$,
monotonicity of the eigenvalues implies $0<\frac{c}{2\kappa}\leq
x_1(\lambda_0)\leq x_n(\lambda) <e_n <d_n$, and therefore 
$c<2\kappa e_n$.  Define $\displaystyle l(\theta) =
1-\frac{\sin(\theta)}{\theta}$. It follows easily that for $\theta\geq
c$ there exists $0<m<1$ such that
\begin{equation*}
m \leq l(\theta)\leq 1
\end{equation*}

Using this bound in~\eqref{eq:epsnEstimate}, in the original unscaled
variables we find that
\begin{align} \label{eq:eigenvalSqrtEst}
R(\sqrt{\alpha_n} -\sqrt{\mu_n})&\leq \frac{CR\sqrt{\alpha_n}\brac{1+R^2\alpha_n}}{\sqrt{\lambda}}
\end{align}
which implies
\begin{align*}
\alpha_n-\mu_n & \leq \frac{2C\alpha_n\brac{1+R^2\alpha_n}}{\sqrt{\lambda}}.
\end{align*}
For $\alpha_n\leq M(\lambda) \equiv K_0\lambda^{\sigma}$, $M(\lambda) > 1$
implies
\begin{align*}
\alpha_n-\mu_n & \leq \frac{C(M(\lambda))^2}{\sqrt{\lambda}}\equiv \frac{CK_0^2}{\lambda^{\frac{1}{2}-2\sigma}}.
\end{align*}
\end{proof}

\begin{remark}
Of interest is the possibility of tighter estimates here. In fact, one can show that if $f''(\mu) >0$ wherever $f(\mu) >0$ we actually have
\begin{equation*}
\alpha_n-\mu_n  \leq \frac{C\alpha_n}{\lambda^{\frac{1}{2}}}.
\end{equation*}
\end{remark}

Theorem~\ref{thm:EigenConv} and Proposition~\ref{prop:eigenBehaviour}
immediately implies
\begin{corollary} \label{cor:eigenMonotone}
For any fixed $n$, we have that $\mu_n(\lambda)$ converges monotonically to $\alpha_n$ as $\lambda\to\infty$.
\end{corollary}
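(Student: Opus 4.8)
The plan is to read the corollary off directly from Theorem~\ref{thm:EigenConv} together with the $\lambda$-monotonicity supplied by Proposition~\ref{prop:eigenBehaviour}, so the argument is genuinely short. First I would fix $n \geq 1$ and pick any admissible exponent, say $\sigma = \tfrac{1}{8} \in (0,\tfrac14)$, with the associated $M(\lambda) = K_0\lambda^{\sigma}$ and threshold $\lambda_0$ furnished by the theorem. Since $\alpha_n$ is a fixed finite number and $M(\lambda) \to \infty$ as $\lambda \to \infty$, there is a $\lambda_1 = \lambda_1(n) \geq \lambda_0$ such that $\alpha_n \leq M(\lambda)$ for every $\lambda \geq \lambda_1$; on this range the hypotheses of Theorem~\ref{thm:EigenConv} are met, and it gives $\alpha_n - \mu_n(\lambda) \leq C K_0^2 \lambda^{-(\frac12 - 2\sigma)}$.

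Next I would pair this with the eigenvalue interlacing already in hand: once $\alpha_n \leq M(\lambda) \leq \lambda/2$ one has $\mu_n(\lambda) < \alpha_n$ (this is exactly the statement $x_n(\lambda) = d_n - \epsilon_n$ with $\epsilon_n > 0$ used inside the proof of Theorem~\ref{thm:EigenConv}, equivalently the ordering recorded in the proposition immediately preceding it). Hence for all $\lambda \geq \lambda_1$,
\[
0 < \alpha_n - \mu_n(\lambda) \leq \frac{C K_0^2}{\lambda^{\frac12 - 2\sigma}},
\]
and since $\tfrac12 - 2\sigma > 0$ the right-hand side tends to $0$; therefore $\mu_n(\lambda) \to \alpha_n$ as $\lambda \to \infty$.

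Finally, for the monotone part I would invoke Proposition~\ref{prop:eigenBehaviour}(1), which gives that $\lambda \mapsto \mu_n(\lambda)$ is continuous and monotone increasing on all of $[0,\infty)$ — not merely for $\lambda$ large. A monotone increasing real function that possesses a finite limit attains that limit as its supremum, so $\mu_n(\lambda)$ increases up to $\alpha_n$, which is precisely the claim that the convergence is monotone. I do not anticipate any real obstacle; the only point requiring a moment's care is that the error bound in Theorem~\ref{thm:EigenConv} is a priori one-sided, so one must bring in the ordering $\mu_n(\lambda) < \alpha_n$ (valid as soon as $\alpha_n \leq M(\lambda)$) to upgrade that bound on the gap to genuine convergence of $\mu_n(\lambda)$ to $\alpha_n$, after which the corollary is immediate.
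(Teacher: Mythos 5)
Your proposal is correct and follows the paper's own route exactly: the paper proves this corollary by simply citing Theorem~\ref{thm:EigenConv} (for convergence of the gap $\alpha_n - \mu_n(\lambda)$ once $\alpha_n \leq M(\lambda)$, which eventually holds since $M(\lambda)\to\infty$) together with Proposition~\ref{prop:eigenBehaviour} (for monotonicity of $\mu_n(\lambda)$ in $\lambda$), declaring the result immediate. Your writeup just supplies the short details — including the useful observation that the one-sided bound must be combined with the ordering $\mu_n(\lambda) < \alpha_n$ — that the paper leaves implicit.
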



\subsection{Eigenfunction Estimates}\label{sec:eigenfunc_estim}
In this subsection we carry over the estimates for the eigenvalues
obtained in the last subsection to obtain the uniform convergence in
$r$ of the eigenfunctions as $\lambda \to \infty$. Though unstated, in
the remainder all theorems and lemmas include the assumptions of
Theorem~\ref{thm:EigenConv}.
\begin{lemma}\label{lem:eigen_func_1}
  The (unnormalized) Doi and Smoluchowski eigenfunctions satisfy
\begin{equation*}\label{eq:eigenfunc_asym}
  \sup_{r\in[\rb,R]}{\abs{\phi_n(r)-\psi^{out}_n(r)}} =\mathcal{O}\paren{\lambda^{-(\frac{1}{2} -\frac{3\sigma}{2})}}
\end{equation*} for $\mu_n< M(\lambda) = K_0 \lambda^\sigma$ as $\lambda\to\infty$.
\end{lemma}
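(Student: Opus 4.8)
The plan is to compare the two explicit eigenfunctions term by term, using the eigenvalue estimate from Theorem~\ref{thm:EigenConv} as the only input. Both $\phi_n$ and $\psi^{out}_n$ have the same functional form on $[\rb, R]$, namely $r^{-1}\left[\frac{\sin(\sqrt{\mu}(R-r))}{R\sqrt{\mu}} - \cos(\sqrt{\mu}(R-r))\right]$, evaluated at $\mu = \alpha_n$ and $\mu = \mu_n(\lambda)$ respectively. So I would define $G(\mu, r) \equiv r^{-1}\left[\frac{\sin(\sqrt{\mu}(R-r))}{R\sqrt{\mu}} - \cos(\sqrt{\mu}(R-r))\right]$ and write $\phi_n(r) - \psi^{out}_n(r) = G(\alpha_n, r) - G(\mu_n, r)$. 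The strategy is then a mean value theorem estimate in the $\mu$ variable: $\abs{G(\alpha_n, r) - G(\mu_n, r)} \le \sup_{\mu} \abs{\partial_\mu G(\mu, r)} \cdot \abs{\alpha_n - \mu_n}$, where the supremum is over $\mu$ between $\mu_n$ and $\alpha_n$.

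The main work is bounding $\partial_\mu G(\mu, r)$ uniformly for $r \in [\rb, R]$ and for $\mu$ in the relevant range, which by the nesting of eigenvalues (Proposition on Doi eigenvalue interlacing) lies below $\alpha_n \le M(\lambda) = K_0 \lambda^\sigma$. Differentiating $G$ in $\mu$: the $\cos$ term contributes $\partial_\mu\cos(\sqrt{\mu}(R-r)) = -\sin(\sqrt{\mu}(R-r))(R-r)/(2\sqrt{\mu})$, which is bounded by $C(R-r)/\sqrt{\mu} \le C R/\sqrt{\gamma_n}$ using $\mu_n \ge \gamma_n = ((n-1)\pi/R)^2$ from Proposition~\ref{prop:eigenBehaviour}; the $\sin(\sqrt{\mu}(R-r))/(R\sqrt{\mu})$ term contributes terms of order $\mu^{-1}$ and $\mu^{-1/2}$, both likewise controlled. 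The factor $r^{-1} \le \rb^{-1}$ is absorbed into $C$ (allowed to depend on $\rb$). The upshot is a bound $\sup_r \abs{\partial_\mu G(\mu, r)} \le C$ uniformly — or, being slightly more careful, a bound that may grow like a small power of $M(\lambda)$. Combined with $\abs{\alpha_n - \mu_n} \le C \lambda^{-(1/2 - 2\sigma)}$ from Theorem~\ref{thm:EigenConv}, and tracking that one factor of $\sqrt{M(\lambda)} = \sqrt{K_0}\lambda^{\sigma/2}$ may appear from the $\mu$-derivative estimates, this yields $\sup_{r\in[\rb,R]}\abs{\phi_n(r) - \psi^{out}_n(r)} \le C\lambda^{-(1/2 - 2\sigma)}\cdot\lambda^{\sigma/2} = C\lambda^{-(1/2 - 5\sigma/2)}$. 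To land exactly on the stated $\lambda^{-(1/2 - 3\sigma/2)}$, I would instead apply the mean value theorem to $\sqrt{\mu}$ rather than $\mu$: by~\eqref{eq:eigenvalSqrtEst} one has $\sqrt{\alpha_n} - \sqrt{\mu_n} \le C R \sqrt{\alpha_n}(1 + R^2\alpha_n)/\sqrt{\lambda} \le C M(\lambda)^{3/2}/\sqrt{\lambda} = C\lambda^{-(1/2 - 3\sigma/2)}$, and then bound $\partial_{\sqrt{\mu}} G$ — whose leading terms $(R-r)\sin$, $(R-r)\cos/(R)$, and $\mu^{-1}$-type pieces are all $O(1)$ uniformly in $r \in [\rb,R]$ and in $\mu$ — by an absolute constant. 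That gives the claimed rate directly.

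The step I expect to be the main obstacle is verifying that $\partial_{\sqrt{\mu}} G(\mu, r)$ is genuinely $O(1)$, uniformly over both $r \in [\rb, R]$ and the full range of admissible $\mu$, without picking up hidden factors of $\mu$ or $r^{-1}$ that would degrade the exponent. The potentially dangerous term is the $\frac{1}{R\sqrt{\mu}}\sin(\sqrt{\mu}(R-r))$ piece: differentiating in $\sqrt{\mu}$ produces both a $\frac{R-r}{R}\cos(\sqrt{\mu}(R-r))$ term (harmless, bounded by $1$) and a $-\frac{1}{R\mu}\sin(\sqrt{\mu}(R-r))$ term; the latter is bounded by $\frac{1}{R\mu}$, which is fine for $n \ge 2$ since $\mu \ge \gamma_2 > 0$, but for $n = 1$ one has $\mu_1(\lambda) \to \alpha_1 > 0$ with $\mu_1(\lambda)$ bounded below by $\mu_1(\lambda_0) > 0$ (monotonicity in $\lambda$), so this term is still uniformly bounded once $\lambda \ge \lambda_0$. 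The overall $r^{-1}$ prefactor is bounded by $\rb^{-1}$, a constant under our conventions. Once these uniformity checks are in place the result follows immediately from~\eqref{eq:eigenvalSqrtEst}.
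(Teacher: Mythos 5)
Your proof is correct and takes essentially the same route as the paper: both reduce the sup-norm difference to the bound $\sqrt{\alpha_n}-\sqrt{\mu_n}\le C\sqrt{\alpha_n}\,(1+R^2\alpha_n)/\sqrt{\lambda}\le C M(\lambda)^{3/2}/\sqrt{\lambda}$ of \eqref{eq:eigenvalSqrtEst} combined with a Lipschitz estimate in the variable $\sqrt{\mu}$ that is uniform over $r\in[\rb,R]$, which the paper simply carries out by hand via sum-to-product identities and $\abs{\sin u}\le\abs{u}$ rather than by differentiating $G$. One small slip that does not affect the conclusion: $\partial_{\sqrt{\mu}}\bigl[\sin(\sqrt{\mu}(R-r))/(R\sqrt{\mu})\bigr]=(R-r)\cos(\sqrt{\mu}(R-r))/(R\sqrt{\mu})-\sin(\sqrt{\mu}(R-r))/(R\mu)$, so the cosine piece also carries a factor of $1/\sqrt{\mu}$, but it is controlled by the same lower bound $\mu\ge\mu_1(\lambda_0)>0$ that you already invoke for the other term.
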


\begin{proof}
\begin{align*}
\abs{\phi_n(r)-\psi^{out}_n(r)} 
&\leq \frac{1}{r}\abs{{\frac{\sin(\sqrt{\alpha_n}(R-r))}{R\sqrt{\alpha_n}}-\frac{\sin(\sqrt{\mu_n}(R-r))}{R\sqrt{\mu_n}}}} + \frac{1}{r}\bigg| \cos(\sqrt{\mu_n}(R-r))-\cos(\sqrt{\alpha_n}(R-r)) \bigg| \\
&:= I + II
\end{align*}
Note that 
\begin{align*}
I&\leq\frac{1}{\rb}\abs{\frac{\sin(\sqrt{\alpha_n}(R-r))}{R\sqrt{\alpha_n}}-\frac{\sin(\sqrt{\mu_n}(R-r))}{R\sqrt{\alpha_n}}} +\frac{1}{\rb}\abs{\frac{\sin(\sqrt{\mu_n}(R-r))}{R\sqrt{\alpha_n}} - \frac{\sin(\sqrt{\mu_n}(R-r))}{R\sqrt{\mu_n}}}\\
&:= I_a + I_b.
\end{align*}
We find
\begin{align*}
I_a &\leq \frac{1}{R\rb\sqrt{\alpha_n}}\abs{2\sin\paren{(R-r)\frac{\sqrt{\alpha_n} -\sqrt{\mu_n}}{2}}\cos\paren{(R-r)\frac{\sqrt{\alpha_n} +\sqrt{\mu_n}}{2}}}
\leq\frac{(R-\rb)\paren{\sqrt{\alpha_n} -\sqrt{\mu_n}}}{R\rb\sqrt{\alpha_n}}.
\end{align*}
Similarly we have 
\begin{align*}
I_b &\leq \frac{\abs{\sin(\sqrt{\mu_n}(R-r))}}{R\rb}\brac{\frac{1}{\sqrt{\mu_n}} - \frac{1}{\sqrt{\alpha_n}}}
\leq\frac{(R-\rb)}{R\rb}\brac{\frac{\sqrt{\alpha_n} -\sqrt{\mu_n}}{\sqrt{\alpha_n}}}.
\end{align*}
Combining these with~\eqref{eq:eigenvalSqrtEst} 
\begin{align*}
  I  &\leq\frac{C\brac{1+R^2\alpha_n}}{\sqrt{\lambda}}
\leq \frac{CM(\lambda)}{\sqrt{\lambda}} = \frac{CK_0}{\lambda^{\frac{1}{2}-\sigma}}.
\end{align*}
For $II$ we have
\begin{align*}
II &\leq\frac{1}{\rb}\brac{2\sin\paren{(R-r)\frac{\sqrt{\alpha_n} -\sqrt{\mu_n}}{2}}\sin\paren{(R-r)\frac{\sqrt{\alpha_n} +\sqrt{\mu_n}}{2}}}\\
&\leq\frac{R-\rb}{\rb}(\sqrt{\alpha_n} -\sqrt{\mu_n})\\
&\leq \frac{C(R-\rb)\sqrt{\alpha_n}\brac{1+R^2\alpha_n}}{\rb \sqrt{\lambda}}\\
&\leq \frac{C\paren{M(\lambda)}^{\frac{3}{2}}}{\sqrt{\lambda}} = \frac{CK_0^{\frac{3}{2}}}{\lambda^{\frac{1}{2}-\frac{3\sigma}{2}}}.
\end{align*}
It follows that 
\begin{equation}\label{eqn:eigenfunc_Diff}
\abs{\phi_n(r)-\psi^{out}_n(r)} 
\leq \frac{CK_0}{\lambda^{\frac{1}{2} -\frac{3\sigma}{2}}}\brac{\frac{1}{\lambda^{\sigma/2}}+\sqrt{K_0}} 
\leq \frac{C_1(\rb,R,K_0)}{\lambda^{\frac{1}{2} -\frac{3\sigma}{2}}}
\end{equation}
This concludes the proof.
\end{proof}

We now prove several uniform properties of the eigenfunctions we will
use in the next subsection.
\begin{lemma}\label{lem:eigen_func_2}
\begin{enumerate}
\item There exist a $\lambda_0$, $C_2 =C_2(\rb,R)$, such that for all $\lambda \geq \lambda_0$ and $n \in \Z^{+}$
\begin{equation}\label{eqn:sup_eigenfunction_bound}
\max \paren{\sup_{r\in[\rb,R]}\abs{\psi_n^{out}(r)}, \sup_{r\in[\rb,R]}\abs{\phi_n(r)}} \leq C_2.
\end{equation}
\item Let $b_n = \norm{\psi_n}^{-2}_2$ and $a_n =\norm{\phi_n}^{-2}_2$. Then there exists $C_3$ such that
for all $\lambda \geq \lambda_{0}$
\begin{align}\label{eqn:sup_normalization_bound}
\max \Big( \sup_{n} \{a_n\}, \,  \sup_{n} \{b_n\}  \Big) \leq C_3.
  \end{align}
\end{enumerate}
\end{lemma}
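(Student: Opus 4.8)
The plan is to read both bounds off the closed-form eigenfunctions of Section~\ref{S:radSymEqs}; the only non-elementary ingredient is a positive lower bound on the lowest eigenvalue that is uniform in $\lambda$, which I would extract from Proposition~\ref{prop:eigenBehaviour} together with the ordering in Proposition~\ref{prop:evalProps} (equivalently from the Poincar\'e min--max principle).

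\textbf{Part (1).} On $[\rb,R]$ one has $1/r\le 1/\rb$, $\abs{\cos(\sqrt{\mu_n}(R-r))}\le 1$ and $\abs{\sin(\sqrt{\mu_n}(R-r))/(R\sqrt{\mu_n})}\le 1/(R\sqrt{\mu_n})$, so from the explicit formulas
\[
  \sup_{r\in[\rb,R]}\abs{\psi^{out}_n(r)}\le\frac{1}{\rb}\paren{1+\frac{1}{R\sqrt{\mu_n(\lambda)}}},\qquad
  \sup_{r\in[\rb,R]}\abs{\phi_n(r)}\le\frac{1}{\rb}\paren{1+\frac{1}{R\sqrt{\alpha_n}}}.
\]
Thus I only need a fixed positive lower bound on $\mu_n(\lambda)$ and $\alpha_n$, uniform in $n\ge1$ and $\lambda\ge\lambda_0$. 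Since the eigenvalues increase with $n$ it suffices to treat $n=1$: $\alpha_1>0$ is fixed, and $\mu_1(\lambda)>0$ for every $\lambda>0$ (a zero ground-state eigenvalue would force the min--max minimizer to be constant \emph{and} to vanish on $[0,\rb]$, hence to be identically $0$), so by monotonicity of $\mu_1(\cdot)$ in $\lambda$ (Proposition~\ref{prop:eigenBehaviour}) one has $\mu_1(\lambda)\ge\mu_1(\lambda_0)>0$ for $\lambda\ge\lambda_0$. Since also $\mu_1(\lambda_0)<\alpha_1$, the choice $C_2=\rb^{-1}\paren{1+(R\sqrt{\mu_1(\lambda_0)})^{-1}}$ gives~\eqref{eqn:sup_eigenfunction_bound}.

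\textbf{Part (2).} Because $a_n=\norm{\phi_n}_2^{-2}$ and $b_n=\norm{\psi_n}_2^{-2}$, this is equivalent to a uniform positive \emph{lower} bound on $\norm{\phi_n}_2^2$ and $\norm{\psi_n}_2^2$. After the substitution $s=R-r$ (writing $L_0=R-\rb$) both norms reduce to a single one-variable function,
\[
  g(\omega):=\int_0^{L_0}\paren{\frac{\sin(\omega s)}{R\omega}-\cos(\omega s)}^2\,ds,\qquad \omega>0,
\]
since $\norm{\phi_n}_2^2=g(\sqrt{\alpha_n})$ and $\norm{\psi_n}_2^2=\int_0^{\rb}\big(\psi^{in}_n\big)^2 r^2\,dr+g(\sqrt{\mu_n})\ge g(\sqrt{\mu_n})$, the discarded term being nonnegative. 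Expanding the square and using $\int_0^{L_0}\cos^2(\omega s)\,ds=\tfrac{L_0}{2}+\tfrac{\sin(2\omega L_0)}{4\omega}$ together with the companion elementary integrals $\int_0^{L_0}\sin^2(\omega s)\,ds$ and $\int_0^{L_0}\sin(\omega s)\cos(\omega s)\,ds$, one gets $g(\omega)=\tfrac{L_0}{2}+O(\omega^{-1})$ as $\omega\to\infty$; moreover $g$ is continuous and strictly positive on $(0,\infty)$ (if $g(\omega_0)=0$ then $\sin(\omega_0 s)/(R\omega_0)=\cos(\omega_0 s)$ for all $s\in[0,L_0]$, which at $s=0$ reads $0=1$). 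Hence $c_0:=\inf_{[\varepsilon_0,\infty)}g>0$ for $\varepsilon_0=\sqrt{\mu_1(\lambda_0)}$, and by Part~(1) all of $\sqrt{\alpha_n},\sqrt{\mu_n(\lambda)}$ lie in $[\varepsilon_0,\infty)$, so $\norm{\phi_n}_2^2,\norm{\psi_n}_2^2\ge c_0$, and $C_3=c_0^{-1}$ gives~\eqref{eqn:sup_normalization_bound}.

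The routine parts are the algebra on the closed forms and the elementary trigonometric integrals. The one place I expect to need care is the \emph{uniformity} of the lower bound in Part~(2) as $n\to\infty$ and $\lambda\to\infty$ at once: it must be ruled out that the $L^2$-mass of an eigenfunction collapses. That is exactly what the two facts used above secure --- the frequencies $\sqrt{\mu_n(\lambda)}$ never drop below the fixed number $\sqrt{\mu_1(\lambda_0)}>0$, and $g(\omega)\to(R-\rb)/2>0$ rather than decaying --- so I do not anticipate further obstacles.
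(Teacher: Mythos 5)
Your proof is correct and follows essentially the same route as the paper: both parts are read off the closed-form eigenfunctions, reduced to a single one-variable function of the eigenvalue (your $g(\omega)$ is the paper's $h(z)$ after the substitution $s=R-r$, $\omega=\sqrt{z}$), with uniformity secured by a $\lambda$-independent positive lower bound on $\mu_1(\lambda)$ and the limit $g(\omega)\to(R-\rb)/2$. The only cosmetic difference is that you justify $\mu_1(\lambda_0)>0$ by a direct variational argument, whereas the paper invokes the monotone convergence $\mu_1(\lambda)\uparrow\alpha_1$ from Corollary~\ref{cor:eigenMonotone}; both are valid.
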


\begin{proof}
1. We start by defining for $z\geq0$, and $\rb\leq r\leq R$ the auxiliary function
\begin{equation*}
H(z,r) := \frac{1}{r}\brac{\frac{\sin(\sqrt{z}(R-r))}{R\sqrt{z}}-\cos(\sqrt{z}(R-r))}.
\end{equation*}
Note $\phi_n(r)\equiv H(\alpha_n,r)$ and $\psi_n^{out}(r)\equiv H(\mu_n,r)$. Now for $z \geq z_0 >0$ we have that
 \begin{align*}
 \abs{H(z,r)} &= \frac{1}{r}\abs{\brac{\frac{\sin(\sqrt{z}(R-r))}{R\sqrt{z}}-\cos(\sqrt{z}(R-r))}}\\
&\leq \frac{1}{\rb}\brac{\frac{1}{R\sqrt{z}} +1}\\
 &\leq \frac{1}{\rb}\brac{\frac{1}{R\sqrt{z_0}} +1} =:C_2(z_0).
 \end{align*}
 By Corollary~\ref{cor:eigenMonotone} there exists $\lambda_0$ such
 that for $\lambda \geq\lambda_0$
\begin{equation*}
\alpha_1\geq\mu_1(\lambda)\geq\mu_1(\lambda_0)\geq\frac{\alpha_1}{2}>0.
\end{equation*}
Note that we are using the fact that both the Doi and Smoluchowski
eigenvalues can be written in non-decreasing order. Choosing $z_0
=\alpha_1/2$, proves the first part of the lemma.

2. To prove the second part start by defining
\begin{equation*}
h(z):=  \int_{\rb}^{R}{\paren{H(z,r)}^2 r^2\,dr}.
\end{equation*}
Once again we have that $\norm{\psi^{out}_n}^{2}_2\equiv h(\mu_n)$ and
$\norm{\phi_n}^{2}_2 \equiv h(\alpha_n)$. \emph{A priori} we have that
$h(z)>0$ for all $z\geq0$.  An explicit computation shows that
for $z>0$, $h(z)$ is continuous,  $\lim\limits_{z\to\infty} h(z) =
(R-\rb)/2 >0$, and 
$\lim\limits_{z\to0} h(z) = (R^3 - \rb^3) / 3 R^2$. 
With the positivity of $h(z)$ on $\left[0,\infty\right)$, these
results imply that $A:=\inf h(z) >
0$. 
It then follows that
\begin{equation*}
a_n =\frac{1}{\norm{\phi_n}^{2}_2} \equiv \frac{1}{h(\alpha_n)}\leq\frac{1}{A}=:C_3
\end{equation*}
and 
\begin{equation*}
b_n =\frac{1}{\norm{\psi_n}^{2}_2}\leq \frac{1}{\norm{\psi^{out}_n}^{2}_2}\equiv \frac{1}{h(\mu_n)}\leq\frac{1}{A}=:C_3.
\end{equation*}
This concludes the proof of the lemma.
\end{proof}



These results imply that 
\begin{lemma} \label{lem:eigen_func_3}
There exists $C_4$ such that for $n$ with $\mu_n(\lambda)\leq\alpha_n\leq M(\lambda)$
\begin{equation*}
\abs{b_n - a_n}\leq  \frac{C_4}{\lambda^{\frac{1}{2} -\frac{3\sigma}{2}}}.
\end{equation*}
\end{lemma}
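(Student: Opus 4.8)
The plan is to estimate $\abs{b_n - a_n} = \abs{\norm{\psi_n}_2^{-2} - \norm{\phi_n}_2^{-2}}$ by writing it as a ratio and controlling the difference of the squared norms. First I would use the elementary identity
\begin{equation*}
  \abs{b_n - a_n} = \frac{\abs{\norm{\phi_n}_2^2 - \norm{\psi_n}_2^2}}{\norm{\psi_n}_2^2 \, \norm{\phi_n}_2^2} \leq C_3^2 \, \abs{\norm{\phi_n}_2^2 - \norm{\psi_n}_2^2},
\end{equation*}
where the denominator bound comes directly from part 2 of Lemma~\ref{lem:eigen_func_2} (the uniform lower bound $A$ on $h(z)$, equivalently the uniform upper bound $C_3$ on $a_n$, $b_n$). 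So everything reduces to estimating the difference of the squared $L^2$ norms.

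Next I would split $\norm{\psi_n}_2^2 = \int_0^{\rb} (\psi_n^{in})^2 r^2\,dr + \int_{\rb}^R (\psi_n^{out})^2 r^2\,dr$ and compare against $\norm{\phi_n}_2^2 = \int_{\rb}^R \phi_n^2 r^2\,dr$. The inner contribution $\int_0^{\rb}(\psi_n^{in})^2 r^2\,dr$ must be shown to be $O(\lambda^{-(\frac12 - \frac{3\sigma}{2})})$ or smaller: from the explicit formula for $\psi_n^{in}$ (the $\mu_n < \lambda$ case), the prefactor is exactly the value $\psi_n^{out}(\rb) = H(\mu_n,\rb)$ divided by $\sinh(\rb\sqrt{\lambda - \mu_n})$, and since $\psi_n^{out}(\rb)$ is uniformly bounded by $C_2$ (Lemma~\ref{lem:eigen_func_2}) while $\sinh(\rb\sqrt{\lambda-\mu_n})$ grows like $e^{\rb\sqrt{\lambda}}$ (using $\mu_n \leq M(\lambda) \leq \lambda/2$), the prefactor is exponentially small; one then bounds $\int_0^{\rb} (\sinh(r\sqrt{\lambda-\mu_n})/r)^2 r^2\,dr$ by something growing at most like $\lambda^{-1}e^{2\rb\sqrt{\lambda-\mu_n}}$, so the whole inner integral decays exponentially in $\sqrt{\lambda}$ — far faster than the required polynomial rate. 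For the outer part I would write
\begin{equation*}
  \int_{\rb}^R \big[\phi_n^2 - (\psi_n^{out})^2\big] r^2\,dr = \int_{\rb}^R \big(\phi_n - \psi_n^{out}\big)\big(\phi_n + \psi_n^{out}\big) r^2\,dr,
\end{equation*}
and bound this by $\sup_{r}\abs{\phi_n - \psi_n^{out}} \cdot \big(\sup_r\abs{\phi_n} + \sup_r\abs{\psi_n^{out}}\big) \cdot \int_{\rb}^R r^2\,dr$, which is $\leq \frac{R^3 - \rb^3}{3} \cdot 2C_2 \cdot \frac{C_1}{\lambda^{\frac12 - \frac{3\sigma}{2}}}$ by Lemma~\ref{lem:eigen_func_1} and Lemma~\ref{lem:eigen_func_2}.

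Combining, $\abs{\norm{\phi_n}_2^2 - \norm{\psi_n}_2^2} \leq C/\lambda^{\frac12 - \frac{3\sigma}{2}}$ (the exponentially small inner term being absorbed for $\lambda$ large), and multiplying by $C_3^2$ gives $\abs{b_n - a_n} \leq C_4/\lambda^{\frac12 - \frac{3\sigma}{2}}$ with $C_4 = C_4(\rb, R, K_0)$, as claimed. I expect the only mildly delicate point to be making the bound on the inner integral $\int_0^{\rb}(\psi_n^{in})^2 r^2\,dr$ fully rigorous — one needs $\lambda - \mu_n$ bounded below (which follows from $\mu_n \leq M(\lambda) \leq \lambda/2$, so $\lambda - \mu_n \geq \lambda/2$) to get both the $e^{-\rb\sqrt{\lambda - \mu_n}}$-type decay from the $\sinh^{-2}$ prefactor and a clean bound on the integral of $\sinh^2$; everything else is a routine application of the two preceding lemmas and the triangle inequality. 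No genuine obstacle is anticipated beyond bookkeeping of constants depending on $\rb$, $R$, $K_0$, and $\lambda_0$.
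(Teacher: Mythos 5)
Your overall architecture is exactly the paper's: bound $\abs{b_n-a_n}$ by $C_3^2\,\abs{\norm{\phi_n}_2^2-\norm{\psi_n}_2^2}$ using the uniform lower bound on the norms from Lemma~\ref{lem:eigen_func_2}, split the norm difference into an outer part (handled by factoring the difference of squares and invoking Lemmas~\ref{lem:eigen_func_1} and~\ref{lem:eigen_func_2}) and the inner contribution $\norm{\psi_n^{in}}_2^2$ (handled by the explicit formula together with $\lambda-\mu_n\geq\lambda/2$). That is the proof in the paper, and your outer estimate and your combination step are correct.

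However, one intermediate claim is wrong: the inner contribution is \emph{not} exponentially small in $\sqrt{\lambda}$. Writing $s=\sqrt{\lambda-\mu_n}$, the squared prefactor decays like $e^{-2\rb s}$, but $\int_0^{\rb}\sinh^2(rs)\,dr=\frac{\sinh(2\rb s)-2\rb s}{4s}$ grows like $e^{2\rb s}/(8s)$; the exponentials cancel exactly, since $\sinh(2\rb s)/\sinh^2(\rb s)=2\coth(\rb s)\to 2$. What remains is
\begin{equation*}
  \norm{\psi_n^{in}}_2^2 \;\leq\; \frac{C\bigl(\sinh(2\rb s)-2\rb s\bigr)}{s\,\sinh^2(\rb s)} \;=\; O\!\paren{\frac{1}{s}} \;=\; O\!\paren{\lambda^{-\frac{1}{2}}},
\end{equation*}
a polynomial rate, which is precisely what the paper computes. (Even taking your own stated bounds at face value — prefactor squared $\sim e^{-2\rb s}$ times an integral $\sim \lambda^{-1}e^{2\rb s}$ — the product is $O(\lambda^{-1})$, not exponentially small, so your concluding sentence does not follow from your own estimates.) Fortunately this does not damage the lemma: $O(\lambda^{-1/2})$ is still dominated by the required $O(\lambda^{-(\frac{1}{2}-\frac{3\sigma}{2})})$ coming from the outer term, so the final bound and the constant $C_4=C_4(\rb,R,K_0)$ stand once you replace ``exponentially small'' by the correct $O(\lambda^{-1/2})$ computation.
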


\begin{proof}
 We start by noting that
 \begin{align*}\label{eqn:B_n_1}
 \abs{\norm{\psi^{out}_n}^2_2 -\norm{\phi_n}^2_2 }&=\abs{\int_{\rb}^{R}\paren{(\psi^{out}_n)^2 -\phi_n^2}r^2\,dr}\nonumber\\
 &\leq \int_{\rb}^{R}\abs{\psi^{out}_n -\phi_n}\abs{\phi_n +\psi^{out}_n}r^2\,dr\nonumber\\
 &\leq \frac{C_1(\rb,R,K_0)}{\lambda^{\frac{1}{2} -\frac{3\sigma}{2}}}\brac{\sup_{r \in \brac{\rb,R}} {\paren{\abs{\psi^{out}_n}+\abs{\phi_n}}}}\int_{\rb}^{R}{r^2\,dr} =\frac{2C_1C_2(R^3-\rb^3)}{3\lambda^{\frac{1}{2} -\frac{\sigma}{2}}}.
 \end{align*}

 To get the last line, we have used Lemmas~\ref{lem:eigen_func_1}
 and~\ref{lem:eigen_func_2}. A direct computation shows that
\begin{align*}
 \abs{\norm{\psi^{out}_n}^2_2 -\norm{\psi_n}^2_2 } = \norm{\psi^{in}_n}^2_2 &= \paren{\frac{\tfrac{1}{R\sqrt{\mu_n}}\sin(\sqrt{\mu_n}(R-\rb))-\cos(\sqrt{\mu_n}(R-\rb))}{\sinh(\rb\sqrt{\lambda-\mu_n})}}^2\int_0^{\rb}{\sinh^{2}(r\sqrt{\lambda-\mu_n})} \, r^2  dr\\
&\leq \frac{C\paren{\sinh(2\rb\sqrt{\lambda-\mu_n}) -2\rb\sqrt{\lambda-\mu_n}}}{\sqrt{\lambda-\mu_n}\sinh^{2}(\rb\sqrt{\lambda-\mu_n})} \leq \frac{C}{\lambda^{\frac{1}{2}}}.
\end{align*} 
(Here we have used that $\sqrt{\lambda - \mu_n} \geq \sqrt{\lambda -
  M(\lambda)} \geq \sqrt{\frac{\lambda}{2}}$.)  Using the preceding
bounds we find, that
\begin{align*}
 \abs{b_n -a_n} = \abs{\frac{1}{\norm{\psi_n}^2_2} -\frac{1}{\norm{\phi_n}^2_2}} 
 = \frac{\abs{\norm{\phi_n}^2_2 -\norm{\psi_n}^2_2 }}{\norm{\phi_n}^2_2\norm{\psi_n}^2_2}
&\leq \frac{\abs{\norm{\psi^{out}_n}^2_2 -\norm{\psi_n}^2_2 }}{\norm{\psi_n}^2_2\norm{\phi_n}^2_2}+\frac{\abs{\norm{\psi^{out}_n}^2_2 -\norm{\phi_n}^2_2 }}{\norm{\psi_n}^2_2\norm{\phi_n}^2_2}\\
& \leq\frac{CC_3^2}{\lambda^{\frac{1}{2}}} + \frac{2C_1C_2C_3^2(R^3-\rb^3)}{3\lambda^{\frac{1}{2} -\frac{3\sigma}{2}}}\\
& =\frac{1}{\lambda^{\frac{1}{2} -\frac{3\sigma}{2}}}\brac{\frac{CC_3^2}{\lambda^{\frac{3\sigma}{2}}} + \frac{2}{3}C_1C_2C_3^2(R^3-\rb^3)} .
 \end{align*}
 The choice $C_4 = CC_3^2 + 2C_1C_2C_3^2(R^3-\rb^3) / 3$ gives the lemma.
\end{proof}


\subsection{An Error Estimate for the Convergence of the Doi to
  Smoluchowski Model}\label{sec:main_result}

We now show the uniform convergence of the Green's function of the
radially symmetric Doi PDE~\eqref{eq:radpureDoi} to the Green's
function of the radially symmetric Smoluchowski
PDE~\eqref{eq:radpureSmol} model. The error bound we give shows that
the convergence of the Doi model to the Smoluchowski model can not be
expected to be faster than $O(\lambda^{-1/2})$ as $\lambda \to
\infty$.

\begin{theorem} \label{thm:densityConv}
  Fix $0<\sigma_0<\frac{1}{4}$ and let the initial condition $g(r)
  =\delta(r-r_0)/r^2$, where $r_0\in(\rb,R)$ . For $ t\geq\delta>0$,
  there exists a function $u(t)$ and $\lambda_0$ such that for all
  $\lambda\geq\lambda_0$ we have:
  \begin{equation}
    \sup_{r\in[\rb,R]}{\abs{\rho(r,t)-\pl(r,t)}}\leq \frac{u(t)}{\lambda^{\frac{1}{2}-2\sigma}}.
  \end{equation}
  Moreover, for $t \in \left[ \delta, \infty \right)$, $u(t)$ is
  uniformly bounded.
\end{theorem}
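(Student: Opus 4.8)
The plan is to substitute the point-mass initial condition into the eigenfunction expansions~\eqref{eq:smolEigFuncExpan}--\eqref{eq:doiEigFuncExpan}, compare the two series, and control the tail separately; the key structural device is to split the summation index at $\alpha_n=M(\lambda)$ so that the estimates of Subsections~\ref{sec:eigen_estimates}--\ref{sec:eigenfunc_estim} apply to the low modes while the high modes are dominated purely by decay. Since $\tilde g(r)=\delta(r-r_0)/r^2$ with $r_0\in(\rb,R)$, we have $(\tilde g,\phi_n)=\phi_n(r_0)$ and $(\tilde g,\psi_n)=\psi^{out}_n(r_0)$, so that for $r\in[\rb,R]$
\begin{equation*}
  \rho(r,t)=\sum_{n\geq1}a_n\,\phi_n(r_0)\,\phi_n(r)\,e^{-\alpha_n t},\qquad
  \pl(r,t)=\sum_{n\geq1}b_n\,\psi^{out}_n(r_0)\,\psi^{out}_n(r)\,e^{-\mu_n(\lambda)t}.
\end{equation*}
Lemma~\ref{lem:eigen_func_2} makes $a_n$, $b_n$, $\phi_n$, $\psi^{out}_n$ uniformly bounded in $n$ and in $\lambda\geq\lambda_0$, and the eigenvalues grow at least quadratically (Proposition~\ref{prop:Cardinality_bound} for $\alpha_n$, Proposition~\ref{prop:eigenBehaviour} for $\mu_n\geq\gamma_n$), so both series converge absolutely and uniformly on $\{r\in[\rb,R],\,t\geq\delta\}$. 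Write $L=\{n:\alpha_n\leq M(\lambda)\}$ and $H=\{n:\alpha_n>M(\lambda)\}$, and split $\rho-\pl$ over $L$ and $H$.

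On $H$ I would bound the two tails separately using only Lemma~\ref{lem:eigen_func_2}: the $H$-part of $\abs{\rho-\pl}$ is $\leq C_2^2C_3\big(\sum_{n\in H}e^{-\alpha_n t}+\sum_{n\in H}e^{-\mu_n t}\big)$. Since $n\in H$ forces $n-1\geq\abs{\mathcal{A}(M(\lambda))}\geq C_1^*\sqrt{M(\lambda)}$ (Proposition~\ref{prop:Cardinality_bound}), we get $\mu_n\geq\gamma_n\geq c\,M(\lambda)$ and $\alpha_n\geq M(\lambda)$ there; writing each exponent as half plus half and using $t\geq\delta$ gives $\sum_{n\in H}(e^{-\alpha_n t}+e^{-\mu_n t})\leq e^{-cM(\lambda)\delta/2}\sum_{n\geq1}(e^{-\alpha_n\delta/2}+e^{-\gamma_n\delta/2})$. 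With $M(\lambda)=K_0\lambda^\sigma$ this is $O\big(e^{-cK_0\lambda^\sigma\delta/2}\big)$, hence the $H$-range contributes at most a $\delta$-dependent constant times $\lambda^{-(1/2-2\sigma)}$.

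On $L$ (so $\mu_n<\alpha_n\leq M(\lambda)$, and Theorem~\ref{thm:EigenConv} together with Lemmas~\ref{lem:eigen_func_1}--\ref{lem:eigen_func_3} all apply) I would use the telescoping identity
\begin{align*}
  a_n\phi_n(r_0)\phi_n(r)e^{-\alpha_n t}-b_n\psi^{out}_n(r_0)\psi^{out}_n(r)e^{-\mu_n t}
  &=(a_n-b_n)\phi_n(r_0)\phi_n(r)e^{-\alpha_n t}\\
  &\quad+b_n\big(\phi_n(r_0)-\psi^{out}_n(r_0)\big)\phi_n(r)e^{-\alpha_n t}\\
  &\quad+b_n\psi^{out}_n(r_0)\big(\phi_n(r)-\psi^{out}_n(r)\big)e^{-\alpha_n t}\\
  &\quad+b_n\psi^{out}_n(r_0)\psi^{out}_n(r)\big(e^{-\alpha_n t}-e^{-\mu_n t}\big).
\end{align*}
Lemmas~\ref{lem:eigen_func_1}--\ref{lem:eigen_func_3} bound each of the first three summands by $C\lambda^{-(1/2-3\sigma/2)}e^{-\alpha_n t}$; summing over $n\in L$ against $\sum_{n\geq1}e^{-\alpha_n t}\leq\sum_{n\geq1}e^{-\alpha_n\delta}=:\Theta(\delta)<\infty$ produces an $O\big(\lambda^{-(1/2-3\sigma/2)}\big)=o\big(\lambda^{-(1/2-2\sigma)}\big)$ contribution. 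For the last summand, $0\leq e^{-\mu_n t}-e^{-\alpha_n t}\leq(\alpha_n-\mu_n)\,t\,e^{-\mu_n t}$ (as $\mu_n\leq\alpha_n$ and $1-e^{-x}\leq x$); combining $t\,e^{-\mu_n t}\leq(e\mu_n)^{-1}$ (valid for all $t>0$) with $\alpha_n-\mu_n\leq C\,M(\lambda)^2/\sqrt{\lambda}=CK_0^2\lambda^{-(1/2-2\sigma)}$ on $L$ (Theorem~\ref{thm:EigenConv}) and the uniform bound $\sum_{n\geq1}\mu_n(\lambda)^{-1}\leq\mu_1(\lambda_0)^{-1}+\sum_{n\geq2}\gamma_n^{-1}=:\Xi<\infty$ (Proposition~\ref{prop:eigenBehaviour}), summing over $n\in L$ gives the dominant contribution $\leq C_2^2C_3CK_0^2\Xi e^{-1}\lambda^{-(1/2-2\sigma)}$. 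Adding the $H$-term and the two $L$-pieces and letting $u(t)$ be the resulting sum of constants — which on $[\delta,\infty)$ may be taken independent of $t$, since $\Theta(t)\leq\Theta(\delta)$ and $te^{-\mu_n t}\leq(e\mu_n)^{-1}$ there, giving the ``moreover'' claim — finishes the proof. The main obstacle is the fourth summand: the factor $t$ generated by the exponential difference must be prevented from growing with $t$, which is exactly why $t\,e^{-\mu_n t}\leq(e\mu_n)^{-1}$ and the uniform summability of $\{\mu_n(\lambda)^{-1}\}$ are needed; and the low/high split cannot be avoided because the gap bound $\alpha_n-\mu_n=O(M(\lambda)^2/\sqrt{\lambda})$ is valid only for $\alpha_n\leq M(\lambda)$, so the tail must instead be killed by the Weyl-type counting estimate of Proposition~\ref{prop:Cardinality_bound}.
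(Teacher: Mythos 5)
Your proof is correct and follows essentially the same route as the paper: the same split of the sum at $\alpha_n = M(\lambda)$, the same four-term telescoping identity on the low modes controlled by Theorem~\ref{thm:EigenConv} and Lemmas~\ref{lem:eigen_func_1}--\ref{lem:eigen_func_3}, and the same $t\,e^{-\mu_n t}\leq (e\mu_n)^{-1}$ device to tame the factor of $t$ coming from the exponential-difference term. The only cosmetic difference is in the tail: the paper tracks an explicit Gaussian/erfc bound with a $t^{-1/2}$ singularity (so its $u(t)$ blows up as $t\to 0^{+}$ and as $\sigma\to 0$), whereas you invoke $t\geq\delta$ at once to make the tail exponentially small in $\lambda$; both yield the stated estimate.
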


\begin{proof}
  The main idea is to use the series representation of the solutions
  to both models to estimate the error. There will be a proliferation
  of constants which we shall repeatedly and unceremoniously denote by
  $C$. For $r \in \paren{\rb,R}$, we begin by writing:
  \begin{align*}
    \pl(r,t)-\rho(r,t)&=\sum_{\{n \mid \alpha_n<M(\lambda)\}}{b_n\psi_{n}(r_0)\psi^{out}_{n}(r)e^{-\mu_{n}(\lambda)t}-a_n\psi_{n}(r_0)\phi_{n}(r)e^{-\alpha_{n}t}} \\
    &\phantom{=} + \sum_{\{n \mid \alpha_n\geq M(\lambda)\}}{b_n\psi_{n}(r_0)\psi^{out}_{n}(r)e^{-\mu_{n}(\lambda)t}-a_n\psi_{n}(r_0)\phi_{n}(r)e^{-\alpha_{n}t}}\\
    &:=I +II.
  \end{align*}
  We deal with the finitely indexed sum, $I$, first. Define the index
  set $\mathcal{A}_\lambda = \{n \mid \alpha_n<M(\lambda)\}$. From now on,
  for simplicity of presentation, we write $\psi_n$ for
  $\psi_n^{out}$.  Let $I = I_a +I_b + I_c +I_d$, where
  \begin{align*}
    I_{a} &= \sum_{\mathcal{A}_\lambda} b_n\paren{\psi_{n}(r_0)-\phi_{n}(r_0)}\psi_{n}(r)e^{-\mu_{n}(\lambda)t},
    &I_{b} &= \sum_{\mathcal{A}_\lambda} b_n\phi_{n}(r_0) \paren{ \psi_{n}(r) - \phi_{n}(r)}e^{-\mu_{n}(\lambda)t}, \\
    I_{c} &= \sum_{\mathcal{A}_\lambda} \paren{b_n-a_{n}} \phi_{n}(r_0)\phi_{n}(r)e^{-\mu_{n}(\lambda)t}, 
    &I_{d} &= \sum_{\mathcal{A}_\lambda} a_n \phi_{n}(r_0) \phi_{n}(r) \paren{e^{-\mu_{n}(\lambda)t} - e^{-\alpha_{n}t}}. 
  \end{align*}
  Recalling that $\gamma_{n}$ denotes the $n$th eigenvalue of the
  radically symmetric Laplacian on $\left[0,R \right)$ with a zero
  Neumann boundary condition at $R$ (see
  Proposition~\ref{prop:eigenBehaviour}), we find
  \begin{align*}
    \abs{I_a} 
    &\leq \sum_{\mathcal{A}_\lambda}{\abs{b_n}\abs{\psi_n(r)}\abs{\psi_n(r_0)-\phi_n(r_0)}e^{-\mu_{n}t}}
    \leq \frac{C}{\lambda^{\frac{1}{2} -\frac{3\sigma}{2}}}\sum_{n=1}^{\infty}{e^{-\gamma_{n}t}}.
  \end{align*}
  Here we have applied Proposition~\ref{prop:eigenBehaviour},
  Lemma~\ref{lem:eigen_func_1}, and Lemma~\ref{lem:eigen_func_2} (in
  particular~\eqref{eqn:eigenfunc_Diff},
  \eqref{eqn:sup_eigenfunction_bound}, and
  \eqref{eqn:sup_normalization_bound}).
  The same argument shows
  \begin{align*}
    \abs{I_b} 
    &\leq \sum_{\mathcal{A}_\lambda}{\abs{b_n}\abs{\psi_n(r)-\phi_n(r)}\abs{\phi_n(r_0)}e^{-\mu_{n}t}}
    \leq \frac{C}{\lambda^{\frac{1}{2} -\frac{3\sigma}{2}}}\sum_{n=1}^{\infty}{e^{-\gamma_{n}t}},
  \end{align*}
  and using Lemma~\ref{lem:eigen_func_3} too we find
  \begin{align*}  
    \abs{I_c} 
    &\leq\sum_{\mathcal{A}_\lambda}{\abs{b_n-a_n}\abs{\phi_{n}(r_0)}\abs{\phi_{n}(r)}e^{-\mu_{n}t}}
    \leq \frac{C}{\lambda^{\frac{1}{2} -\frac{3\sigma}{2}}}\sum_{n=1}^{\infty}{e^{-\gamma_{n}t}}.
  \end{align*}
  Finally, we have that
  \begin{align*}
    \abs{I_d} 
    &\leq \sum_{\mathcal{A}_{\lambda}}{\abs{a_n}\abs{\phi_n(r)}\abs{\phi_n(r_0)}\abs{1-e^{-(\alpha_{n}-\mu_n)t}}} \, e^{-\mu_{n}t}.
  \end{align*}
  For $s\geq 0$, $1-e^{-s}\leq \abs{s}$ so that, using the same lemmas
  as before and Theorem~\ref{thm:EigenConv},
  \begin{align*}
    \abs{I_d}&\leq C \sum_{\mathcal{A}_{\lambda}}{e^{-\mu_{n}t}\abs{\alpha_n -\mu_n}t} 
    \leq \frac{C}{\lambda^{\frac{1}{2} -2\sigma}}\sum_{n=1}^{\infty}{t \, e^{-\mu_{n}t}}.
  \end{align*}

  We now bound the tail of the series, $II$. First define the index
  set $\mathcal{B}_\lambda = \{n \mid \alpha_n\geq M(\lambda)\}$. We now
  specify the choice $K_0 > 4\pi^2/(R-\rb)^2$ which guarantees that
  $K_0\lambda^\sigma>4\pi^2/(R-\rb)^2$ (see
  Remark~\ref{rem:card_choice} after the proof of
  Proposition~\ref{prop:Cardinality_bound}). Using the uniform bounds
  on $\psi_n$, $\phi_n$, $a_n$, and $b_n$ and
  Proposition~\ref{prop:Cardinality_bound} we find
  \begin{align*}
    \abs{II} & \leq C_1 \sum_{n\geq C_1^{*}\sqrt{K_0}\lambda^{\frac{\sigma}{2}}} e^{-\mu_{n}t} + C_2 \sum_{n\geq C_1^{*}\sqrt{K_0}\lambda^{\frac{\sigma}{2}}}{e^{-\alpha_{n}t}}
    \leq C\sum_{n\geq\lambda^{\frac{\sigma}{2}}}{e^{-\gamma_{n}t}}.
  \end{align*}
  We thus obtain the error estimate that
  \begin{equation}\label{eq:diff_sum}
    \abs{\rho(r,t)-\pl(r,t)}\leq C \brac{\frac{1}{\lambda^{\frac{1}{2} -\frac{3\sigma}{2}}}\sum_{n=1}^{\infty}{e^{-\gamma_{n}t}}
      + \frac{1}{\lambda^{\frac{1}{2} -2\sigma}}\sum_{n=1}^{\infty}{t \, e^{-\mu_{n}t}}
      + \sum_{n\geq\lambda^{\frac{\sigma}{2}}}{e^{-\gamma_{n}t}}}.
  \end{equation}
  
  We estimate the terms in \eqref{eq:diff_sum} one at a time.  First
  \begin{align*}
    \sum_{n=1}^{\infty}{e^{-\gamma_{n}t}}& = \sum_{n=0}^{\infty}{\exp\brac{-\frac{n^2t\pi^2}{R^2}}}
    \leq 1 + \int_{0}^{\infty}{\exp\brac{-\frac{x^2t\pi^2}{R^2}}\,dx}
    = 1 + \frac{R}{\sqrt{4\pi t}},
  \end{align*} 
  while
  \begin{align*}
    \sum_{n=1}^{\infty}{te^{-\mu_{n}t}}& \leq te^{-\mu_1(\lambda)t} +  \sum_{n=2}^{\infty}{te^{-\gamma_{n}t}} \\
    &\leq te^{-\mu_1(\lambda_0)t} +  \sum_{n=1}^{\infty}{t\exp\brac{-\frac{n^2t\pi^2}{R^2}}} \\
    &\leq  \frac{1}{e\mu_1(\lambda_0)} +\frac{R^2}{e\pi^2}\sum_{n=1}^{\infty}{\frac{1}{n^2}}\leq \bar{C}.
  \end{align*}
  Finally, we bound the third term in~\eqref{eq:diff_sum}:
  \begin{align*}
    \sum_{n\geq\lambda^{\frac{\sigma}{2}}}{e^{-\gamma_{n}t}} &=  \sum_{n\geq\lambda^{\frac{\sigma}{2}}-1}{\exp\brac{-\frac{n^2t\pi^2}{R^2}}}\\
    &\leq  \int_{\lambda^{\frac{\sigma}{2}}-2}^{\infty}{\exp\brac{-\frac{x^2t\pi^2}{R^2}}\,dx}\\
    &\leq  \frac{R}{\sqrt{4\pi t}}\erfc\paren{\frac{(\lambda^{\sigma/2} -2)\sqrt{t}\pi}{R}}\\
    &\leq \frac{\hat{C} R}{\sqrt{4\pi t}}e^{-\til{C}\lambda^{\sigma}t}.
  \end{align*}
  Combining the preceding estimates we have
  \begin{align*}\label{eq:diff_sum_2}
    \abs{\rho(r,t)-\pl(r,t)}&\leq\frac{C}{\lambda^{\frac{1}{2} -2\sigma}}
    \brac{\frac{1}{\lambda^{\sigma/2}} \paren{1 + \frac{R}{\sqrt{4\pi t}}} + \bar{C} + \hat{C} \lambda^{\frac{1}{2}-2\sigma} \frac{R}{\sqrt{4\pi t}}  e^{-\til{C}\lambda^{\sigma}t}}\\
    &\leq \frac{C}{\lambda^{\frac{1}{2} -2\sigma}}\brac{\frac{1}{\sqrt{t}}\paren{1+\frac{C(\sigma)}{t^{\frac{1}{2\sigma}-2}}}+1}.
  \end{align*} 
  Here we absorbed the maximum of the many constants into $C$ and 
  \begin{equation*}
    C(\sigma) \equiv \paren{\frac{1}{\til{C}e}\brac{{\frac{1}{2\sigma}-2}}}^{{\frac{1}{2\sigma}-2}}.
  \end{equation*}
  Let
  \begin{equation*}
    u(t):= C\brac{\frac{1}{\sqrt{t}}\paren{1+\frac{C(\sigma)}{t^{\frac{1}{2\sigma}-2}}}+1}.
  \end{equation*}
  For $t \geq \delta > 0$ we see that $u(t)$ is uniformly bounded,
  concluding the proof.
\end{proof}
\begin{remark}
  Note that even for $t\geq\delta>0$, $u(t)\to\infty$ as $\sigma\to 0$
  because $u(t)$ blows up like
  $\paren{\frac{1}{\sqrt{\delta\sigma}}}^{\frac{1}{\sigma}}$.
\end{remark}

\section{Conclusion}
We have shown for the special case of two molecules that may undergo
the annihilation reaction, $\textrm{A} + \textrm{B} \to \varnothing$,
with one of the two molecules stationary, the solution to the Doi
model~\eqref{eq:radpureDoiAvg} converges to the solution of the
Smoluchowski model~\eqref{eq:radpureSmolAvg} as $\lambda \to \infty$.
A rigorous asymptotic convergence rate that is $O(\lambda^{-\frac{1}{2} +
  \epsilon})$, for all fixed $\epsilon > 0$, was proven for the maximum
difference between the two models over all $r \in \paren{\rb,R}$ and
$t \in \paren{\delta,\infty}$ (for any fixed $\delta > 0$).  Numerical
evaluation of the exact eigenfunction expansions, binding time
distributions, and mean binding times illustrated this convergence
rate, and demonstrated that for sufficiently large fixed values of
$\lambda$ the difference between the two models scaled like
$\rb^{-1}$ as $\rb$ was decreased. For biologically relevant values
of $\rb$, such as the reaction-radius for a protein diffusing to a
fixed DNA binding site, it was found that $\lambda$ should be chosen
at least as large as $10^{11} \textrm{s}^{-1}$ for the mean binding
time in the two models to differ by less than $1 \%$.

There are a number of extensions of the current work that would aid in
clarifying the rigorous relationship between the Smoluchowski and Doi
models. Foremost would be the study of more detailed, biologically
realistic, models in which multiple diffusing and reacting chemical
species are present. Such models require the introduction of unbinding
reactions, $\textrm{C} \to \textrm{A} + \textrm{B}$, which in the
Smoluchowski model require the use of unbinding radii (a separation,
greater than $\rb$, at which to place the newly created molecules to
avoid their immediate rebinding).  As numerical methods to solve the
Doi and Smoluchowski models have been used to study biological
systems, understanding how parameters in the Doi model should be
chosen so as to accurately approximate the Smoluchowski model would
greatly aid in comparing predictions by these different approaches.

\section{Acknowledgments} 
SAI and ICA are supported by NSF grant DMS-0920886.  ICA was also
supported by the Center for Biodynamics NSF RTG grant DMS-0602204.

\appendix

\section{Mean Binding Time} \label{S:appendix} Let $\avg{\Td}$ denote
the mean time at which the two molecules in the Doi
model~\eqref{eq:radpureDoi} first react when initially separated by
$r_0$.  $\avg{\Td}$ can be shown to
satisfy~\cite{GardinerHANDBOOKSTOCH,VanKampenSTOCHPROCESSINPHYS}
\begin{align} \label{eq:radpureDoiAvg}
  \Delta_{r_0} \avg{\Td}(r_0) - \hat{\lambda} \, \ind_{\{r < \rb\}}(r) \avg{\Td}(r_0) =-\frac{1}{D}, \quad 0 \leq r_0 < R, 
\end{align}
with the boundary condition,
\begin{align*}
  \PD{\avg{\Td}}{r_0}(R) = 0.
\end{align*}
(Here $\hat{\lambda} = \lambda / D$.) The solution to~\eqref{eq:radpureDoiAvg}
is given by~\eqref{eq:doiMeanBindTimeSolut}. 

The mean time, $\avg{\Ts}(r_0)$, at which the two molecules in the
Smoluchowski model~\eqref{eq:radpureSmol} first react when initially
separated by $r_0$ can be shown to
satisfy~\cite{GardinerHANDBOOKSTOCH,VanKampenSTOCHPROCESSINPHYS}
\begin{align} \label{eq:radpureSmolAvg}
 \Delta_{r_0} \avg{\Ts}(r_0) =-\frac{1}{D}, \quad \rb < r_0 < R,
\end{align}
with the boundary conditions,
\begin{align*}
  \avg{\Ts}(\rb) &= 0, && \PD{\avg{\Ts}}{r_0}(R) = 0.
\end{align*}
The solution to~\eqref{eq:radpureSmolAvg} is given
by~\eqref{eq:smolMeanBindTimeSolut}.

\section{Discrete Space and Time Points} \label{S:appendixB}
The spatial evaluation points, $r_i$, are generated in MATLAB by
\begin{center}
\begin{minipage}{0.98\textwidth}
\begin{lstlisting}[caption={$r_i$ points},label={lst:rpoints}]
    r1 = rb + rb*[5e-6 1e-5 5e-5 1e-4 5e-4 1e-3 5e-3]';
    r  = [r1; (.01:.01:1)' * (R-rb) + rb];
\end{lstlisting}
\end{minipage}
\end{center}

The time evaluation points, $t_j$, are generated in MATLAB by
\begin{center}
\begin{minipage}{.98\textwidth}
\begin{lstlisting}[caption={$t_j$ points},label={lst:tpoints}]
    t  = [1e-5 1e-4 1e-3 .01:.01:100 101:1:200 200:10:1000 1200:200:10000]';
\end{lstlisting}
\end{minipage}
\end{center}

\thispagestyle{empty} \bibliography{lib.bib}
\bibliographystyle{amsplain}

\end{document}